\documentclass{article}
\usepackage[
backend=biber,
style=numeric, 
sorting=ynt
]{biblatex}
\usepackage[document]{ragged2e}

\usepackage{graphicx} 
\usepackage{amsthm}
\newtheorem{theorem}{Theorem}
\newtheorem{definition}{Definition}
\newtheorem{lemma}[theorem]{Lemma}
\newtheorem{corollary}{Corollary}[theorem]
\newtheorem{remark}{Remark}
\newtheorem{assumption}{Assumption}[theorem]

\usepackage{xcolor}
\usepackage{amsmath}

\usepackage{longtable}
\usepackage{amssymb}
\usepackage{comment}
\usepackage{hyperref}
\usepackage{relsize}

\newcommand{\cK}{{\mathcal K}}

\newcommand{\bbC}{{\mathbb C}}

\usepackage{caption}
\usepackage{blindtext}

\title{Berry--Esseen theorems, LLT, Edgeworth expansions, and large deviations for Locally Stationary Markov Chains}
\author{Brendan Williams, Yeor Hafouta\\
University of Florida and Ben-Gurion University of the Negev}
\date{September 2026}

\begin{document}

\maketitle

\begin{abstract}
Locally stationary Markov chains have attracted a lot of attention in statistics in the past three decades. In this paper
   we prove a variety of limit theorems for partial sums generated by such chains. We first provide explicit formulas for the asymptotic mean and variance (and also higher moments), and obtain optimal convergence rates towards them. We then prove a Berry--Esseen theorem, a local central limit theorem, Edgeworth expansions, and large and moderate deviations principles. Our approach involves a  parametric Perron--Frobenius theorem, which is proved using the theory of complex (Hilbert) projective metrics developed in \cite{Rugh,Dubois}, together with local approximation arguments and ideas in \cite{dolgopyat2023berry}.
\end{abstract}

\section{Introduction}
The central limit theorem (CLT) is one of the most celebrated results in probability theory, stating that for a sequence $(S_n)$, there are $A_n\in\mathbb R$ and $B_n\to\infty$ such that $(S_n-A_n)/B_n$ weakly converges to the standard normal law. Once this is achieved, it is desirable to see when the optimal uniform rates $O(B_n^{-1})$ can be obtained. Other very important related results are the local CLT (LCLT or just LLT), which concerns the asymptotics of probabilities of the form $\mathbb P(S_n\in I)$ for an interval $I$,
and Edgeworth expansions, which are high-order correction terms in the CLT. An additional important statistical property is large deviations, which investigate the rates of decay for probabilities of the form $\mathbb P(S_n/b_n\in\Gamma)$ on a logarithmic scale for appropriate normalizing sequences $b_n\to\infty$ and Borel measurable sets $\Gamma$. 
\vskip0.1cm

In this paper, we consider Markov chains that are locally stationary, which are an important class of triangular arrays of Markov chains. Markov chains are widely used in modeling dependent data. However, a significant limitation of most Markovian models is their reliance on time homogeneity.
As such, locally stationary models have been considered in recent years for processes where stationarity is not reasonable. Roughly speaking, locally stationary Markov chains are arrays of Markov chains that can be approximated by a family of stationary Markov chains. 
Locally stationary models were first introduced in \cite{dahlhaus1997fitting}, and have been of interest in recent years. See also \cite{dahlhausOver} for an overview.
Applications include locally stationary analogues of likelihood approximation, time-homogeneous, autoregressive processes and finite-state Markov chains: see, for example, \cite{dahlhausLikelihood}, \cite{dahlhaus2006statistical}, \cite{rao2006some} and \cite{vogt2012nonparametric}. The CLT for certain classes of locally stationary chains was studied in \cite{CLT} and in a more general framework in \cite{truquet2019local}. See, for instance, also \cite{CLT1}, \cite{CLT2} and \cite{CLT3}. 
Compared with the situation of an inhomogeneous chain \cite{dolgopyat2020local,dolgopyat2023berry, DH, HW, MagdaLLT1, PelLLT} (i.e., a sequence), the issue here is that we only have a triangular array of Markov operators, and so the arguments in the above results for non-stationary chains depend on the parameter $n$ of the array. In particular, certain constants that appear in the arguments might grow in $n$. Moreover, it is desirable to obtain results under conditions on each of the stationary chains and not on the triangular array itself. 
\vskip0.1cm

Despite being studied in the literature by many authors (as discussed above), limit theorems beyond the CLT for locally stationary chains have not been obtained. 
In this paper, we will close this gap.
Let $\{X_{k,n}: 1\leq k\leq n\}$ be the locally stationary array.
We consider partial sums of the form $S_n=\sum_{j=1}^ng(k/n,X_{k,n})$ for bounded functions $g(u,x)$ which are Lipschitz continuous in $u$, uniformly in $x$. 
\vskip0.1cm
We first study the behavior of the mean and the variance of $S_n$. We compute explicitly the asymptotic mean $b$ and variance $a^2>0$ by means of a process $Z_j=(U,V_j)$ whose conditional distributions with respect to $U=u$ form the locally stationary Markov chain corresponding to the parameter $u$. Here, $U$ is uniformly distributed on $[0,1]$.  We also show that $|\mathbb E [S_n]-bn|$ and $|\text{Var}(S_n)-a^2n|$ are bounded. We also obtain similar estimates for high-order cumulants and moments of $S_n$.
Then we obtain optimal CLT rates for $(S_n-bn)/(a\sqrt n)$.  After that, under certain irreducibility conditions on the functions $g(u,\cdot)$, we obtain an LLT and first-order correction terms in the CLT. When the functions $g(u,x)$ are H\"older continuous in both $u$ and $x$ and the stationary kernels have positive bounded densities, we prove high-order correction terms (i.e., Edgeworth expansions). When $g$ is Lipschitz continuous, we obtain expansions of all orders. We also prove a local large deviations principle and global moderate deviations principle with optimal scale.
\vskip0.1cm

As noted above, the main ideas in the proof rely on local approximation by stationary chains.
A key tool in the proof is a parametric Perron--Frobenius theorem; see Theorem \ref{RPF theorem single operator}. This is proved by using contraction properties of complex projective metrics developed by Rugh \cite{Rugh} and Dubois \cite{Dubois} together with ideas in \cite{HK}. The reason we use complex projective metrics and not more classical tools like inverse function theorems is that the former provides uniform estimates in the parameter defining the local stationarity, as well as bypasses the fact that the local stationary distribution depends on the parameter. 
In addition, 
a lot of effort is dedicated to controlling the behavior of the eigenvalues with respect to the parameter defining the local stationarity.  
\vskip0.1cm

Apart from the Perron--Frobenius theorem discussed above, 
our approach for the variance and mean growth is partially based on ideas in \cite{HK,koralov2025local}. Our approach to the Berry--Esseen theorem is partially based on ideas in \cite{HK, dolgopyat2023berry,koralov2025local}. Our approach for the local limit theorem and the first-order expansions is partially based on ideas in \cite{koralov2025local}. Our approach to the Edgeworth expansion is based on ideas in \cite{dolgopyat2023berry} and \cite{PelLLT}. Our proof of the large and moderate deviations follows ideas in \cite{DH1}.

\section{Preliminaries}
Let $(E,d)$ be a metric space and let $\{Q_u: u\in [0,1]\}$ be a family of transition probabilities  on $(E, \mathcal B(E))$, where $\mathcal B(E)$ is the Borel $\sigma$-algebra on $E$. That is, $Q_u(x,\cdot)$ is a probability measure on $(E, \mathcal B(E))$  for all $x\in E$. 
We consider triangular arrays of random variables $\{X_{n,j}: 1\leq j\leq n \} $ such that for all $n\in \mathbb Z^+$,  the sequence $(X_{n,j})_{1\leq j\leq n}$ is an inhomogeneous Markov chain and for all $x\in E$ and $A\in\mathcal B(E)$,
\begin{equation}\label{LS}
\mathbb P (X_{n,k} \in A \, |\, X_{n,k-1} = x) = Q_{k/n}(x,A),\,\,\, 2\leq k \leq n.    
\end{equation}

We now fix our notation. For all integers $j\geq 1$, we denote by $\mathcal{P}(E^j)$ the set of probability measures on $(E^j, \mathcal{B}(E^j))$. We define $\mathfrak B$ to be the Banach space of bounded $\mathcal{B}(E)$-measurable complex functions on $E$ equipped with the supremum norm $\|f\|_\infty = \sup_{x\in E} |f(x)|$. We define $\mathcal L_{\mathfrak B}$ to be the space of all bounded linear operators from $\mathfrak B$ to itself. We denote by $\mathfrak B'$ the dual space of $\mathfrak B$.
For $\varphi\in \mathfrak B'$ and $\upsilon \in \mathfrak B$, we define $\upsilon \otimes \varphi \in \mathcal L_{\mathfrak B}$ by $(\upsilon \otimes \varphi)f = \langle \varphi, f \rangle \upsilon$ for all $f\in \mathfrak B$. We let $r(\cdot)$ denote the spectral radius of an operator with respect to the norm on $\mathcal L_{\mathfrak B}$.

\begin{definition}
    For $u\in [0,1]$, let $\pi_u$ denote the invariant probability measure of $Q_u$, and let $\pi_{u,j}$ denote the $j$-dimensional marginal distribution of the stationary Markov chain with transition kernel $Q_u$. Let $\pi_{k,j}^{(n)}$ denote the distribution of $(X_{n,k}, \cdots, X_{n, k+j-1})$. We say that $\{X_{n,k}: 1\leq k\leq n\}$ is locally stationary if, for every $j\geq 1$, $u\mapsto \pi_{u,j}$ is continuous in total variation and $$\lim_{n\to\infty}\sup_{1\leq k\leq n-j+1}\| \pi_{k,j}^{(n)} - \pi_{k/n,j}  \|_{\operatorname{TV}} = 0.$$
    We assume that $X_{n,0}$ has probability distribution $\pi_0$.
\end{definition}

Note that determining the invariant probability measure can be challenging in practice. See, for example, \cite{truquet2019local}  for inference of the invariant measures for finite-state Markov chains.

\begin{definition}
    For $u\in [0,1]$, define the operator $\Pi_u \in \mathcal L_{\mathfrak B}$  by $\Pi_u:= \mathbf{1} \otimes \pi_u$. Equivalently, for each $f\in \mathfrak B$,
    $$\Pi_u f = \pi_u(f)\mathbf{1}.$$
\end{definition}

We consider statistics of the form $$S_n = \sum_{k=1}^n g(k/n, X_{n,k})$$ for a bounded measurable function $g:[0,1]\times E \to \mathbb R$.

We denote by $\|\cdot \|_{\text{TV}}$ the total variation norm and by $\|\cdot\|_\infty$ the operator norm induced by the supremum norm $\|\cdot\|_{\infty}$. In particular, for each $T\in \mathcal{L}_{\mathfrak B}$,  $$\|T\|_\infty := \sup_{\|f\|_{\infty} \leq 1} \|Tf\|_{\infty}. $$
Furthermore, for Markov operator $Q_u$, $Q_v$, 
$$\|Q_u - Q_v\|_\infty = \sup_{x\in E}\| \delta_x Q_u - \delta_x Q_v    \|_{\text{TV}},$$ where $$\| \mu-\nu \|_{\text{TV}} = \sup_{\|f\|_{{\infty}} \leq 1} \left|\int f d\mu - \int fd\nu\right| = 2\sup_{A\in\mathcal B(E)} |\mu(A) - \nu(A)|$$
and  $\delta_x$ denotes the Dirac measure concentrated at $x$.

For $\varphi\in\mathfrak B'$, we use the dual norm
$$ \|\varphi\|_{\mathfrak B'} := \sup_{\|f\|_{\infty}\leq 1} |\langle\varphi,f\rangle|. $$

Our results will use the following.
\begin{definition}
    For $f\in \mathfrak B$, $u\in[0,1]$ and $z\in \mathbb C$, the twisted Markov operator $Q_{u,z}\in\mathcal L_{\mathfrak B}$ is defined by $$Q_{u,z}f(x) = \int e^{zg(u,y)} f(y) Q_u(x, dy).$$
\end{definition}

Our methods are based on the following observation.
From the definition of $S_n$, we immediately get the following result.
\begin{lemma}
    For each $f\in \mathfrak B$, $z\in\mathbb C$, and every Borel probability measure $\mu$ on $E$, 
    $$\mathbb E_{\mu}[e^{zS_n}f(X_{n,n})] = \left\langle  \mu,  Q_{1/n,z}Q_{2/n,z}\cdots Q_{n/n,z}f     \right\rangle.$$
\end{lemma}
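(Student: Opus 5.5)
The identity follows from the Markov property by backward induction, peeling off one factor $e^{zg(k/n,X_{n,k})}$ at a time starting from the last time step. Here $\mu$ is the law of $X_{n,0}$, and the chain moves from $X_{n,k-1}$ to $X_{n,k}$ via $Q_{k/n}$ for $1\le k\le n$. Equation \eqref{LS} is stated for $2\le k\le n$; to match the product, which begins with $Q_{1/n,z}$, I read the step $k=1$ the same way, so that $X_{n,1}$ given $X_{n,0}=x$ has law $Q_{1/n}(x,\cdot)$.

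For $0\le m\le n$ define
\[
h_m := Q_{(m+1)/n,z}\cdots Q_{n/n,z}f,
\]
with $h_n=f$. Each $h_m$ lies in $\mathfrak B$, because $g$ is bounded and so $\|Q_{u,z}\|_\infty\le e^{|\operatorname{Re} z|\,\|g\|_\infty}$. The claim to prove by downward induction on $m$ is
\[
\mathbb E_\mu\Big[e^{zS_n}f(X_{n,n})\Big]=\mathbb E_\mu\Big[e^{z\sum_{k=1}^{m}g(k/n,X_{n,k})}\,h_m(X_{n,m})\Big].
\]
For $m=n$ this is the definition of $S_n$.

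For the inductive step from $m$ to $m-1$, condition on $\mathcal F_{m-1}=\sigma(X_{n,0},\dots,X_{n,m-1})$. The factor $e^{z\sum_{k<m}g(k/n,X_{n,k})}$ is $\mathcal F_{m-1}$-measurable and bounded, so it comes out of the conditional expectation. By the Markov property together with \eqref{LS},
\[
\mathbb E\big[e^{zg(m/n,X_{n,m})}h_m(X_{n,m})\,\big|\,\mathcal F_{m-1}\big]=\int e^{zg(m/n,y)}h_m(y)\,Q_{m/n}(X_{n,m-1},dy)=(Q_{m/n,z}h_m)(X_{n,m-1})=h_{m-1}(X_{n,m-1}).
\]
This completes the induction. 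At $m=0$ the sum is empty, and the right-hand side becomes $\mathbb E_\mu[h_0(X_{n,0})]=\langle\mu,h_0\rangle$, which is the asserted formula.

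There is no real obstacle here. The only points needing care are two. First, integrability: every function involved is bounded, so the conditional expectations are well defined, with complex values handled by splitting into real and imaginary parts. Second, the indexing convention for the first transition, namely that the initial law $\mu$ sits at time $0$ and is followed by the kernel $Q_{1/n}$, as noted above.
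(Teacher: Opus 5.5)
Your backward induction via the Markov property is correct. It is exactly the computation the paper treats as immediate from the definition of $S_n$ (no proof is written out there). Your convention that $X_{n,0}\sim\mu$, with the first transition governed by $Q_{1/n}$, is the right reading: it is what makes the product begin with $Q_{1/n,z}$, since \eqref{LS} as stated only covers $2\le k\le n$.
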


\section{Main results}

\subsection{Standing assumptions}
In this section, suppose that $\{X_{n,k}: 1\leq k\leq n\}$ satisfies \eqref{LS} and consider the following cascade of assumptions. 
\begin{assumption}\label{MainAssm}
\,
\vskip0.1cm

    \begin{enumerate}
    \item[(i)]   There exists a positive real number $L$ such that for all $(u,v)\in [0,1]^2$, $$\|Q_u - Q_v\|_{\infty} = \sup_{x\in E}\|\delta_x Q_u - \delta_xQ_v\|_{\operatorname{TV}}\leq L|u-v|.$$ \label{assumption 1}
\item[(ii)] There exists an integer $m\geq 1$ and $r\in (0,1)$ such that $$\sup_{u\in[0,1]}\sup_{x,y\in E}\|\delta_x Q_u^m - \delta_yQ_u^m\|_{\operatorname{TV}}\leq r.$$ \label{assumption 2}
  \item[(iii)] There exists $K<\infty$ such that $\sup_{u\in[0,1]}\|g(u,\cdot)\|_\infty\leq K.$ 
    \item[(iv)] The observable is Lipschitz in $u$: There exists $C_g \geq 0$ such that
$$
|g(u,x)-g(v,x)|\le C_g|u-v|.
$$
\item[(v)] A non-arithmetic condition holds: For every $u\in [0,1]$ and $t\neq 0$, $r(Q_{u,it})<1.$
\end{enumerate}
\end{assumption}

 The non-arithmetic condition is not necessary for the Perron--Frobenius results and the optimal CLT rates, but will be for the local limit theorem and the Edgeworth expansions.

 Truquet \cite{truquet2019local} showed that Assumptions \ref{MainAssm} (i)-(ii) imply the following result.
\begin{theorem}[Theorem 1 in \cite{truquet2019local}]\label{Tru}
    Make Assumptions \ref{MainAssm} (i)-(ii). Then, for all $u\in[0,1]$, the Markov kernel $Q_u$ has a unique invariant probability measure $\pi_u$. The triangular array of Markov chains $\{X_{n,k}: n\in \mathbb Z^+, k\leq n\}$ is locally stationary for the total variation distance. Moreover, for all integers $j\geq 1$, there exists a positive real number $C_j$, not dependent on $k,n,u$ and such that $$\| \pi_{k,j}^{(n)} - \pi_{u,j}\|_{\operatorname{TV}}\leq C_j \left[|u-k/n|+1/n \right].$$
\end{theorem}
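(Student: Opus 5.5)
The plan is to build everything from the uniform Doeblin-type contraction in Assumption \ref{MainAssm}(ii) combined with the Lipschitz bound in (i).

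\textbf{Step 1 (existence, uniqueness and contraction).} Fix $u$. Condition (ii) says that the Dobrushin coefficient of $Q_u^m$ is at most $r/2<1$ (with the factor of $2$ from our TV normalization; in any case it is strictly less than $1$). Hence $\mu\mapsto \mu Q_u^m$ is a strict contraction on the complete metric space $(\mathcal P(E),\|\cdot\|_{\operatorname{TV}})$. This gives a unique fixed point, which is also invariant for $Q_u$ (because $\pi Q_u$ is again fixed by $Q_u^m$). It also gives the geometric bound
$$\|\mu Q_u^{k}-\nu Q_u^k\|_{\operatorname{TV}}\le C\rho^k\|\mu-\nu\|_{\operatorname{TV}},\qquad \rho=(r/2)^{1/m}<1,$$
with $C$ and $\rho$ uniform in $u$. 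The same argument applies to products $Q_{u_1}\cdots Q_{u_m}$ with nearby $u_i$: by (i), such a product differs from $Q_{u_1}^m$ in operator norm by at most $mL\max|u_i-u_1|$. Consequently, for blocks whose parameters vary by $O(m/n)$, the inhomogeneous products are uniformly contracting once $n$ is large. For small $n$ the claimed bound is trivial, since TV distances are at most $2$.

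\textbf{Step 2 (regularity of $u\mapsto\pi_u$).} I would write
$$\pi_u-\pi_v=\pi_uQ_u^{k}-\pi_vQ_v^{k}=(\pi_u-\pi_v)Q_u^k+\pi_v(Q_u^k-Q_v^k).$$
Using $\|Q_u^k-Q_v^k\|_\infty\le kL|u-v|$ and choosing $k$ so that $C\rho^k\le 1/2$, this yields $\|\pi_u-\pi_v\|_{\operatorname{TV}}\le 2kL|u-v|$. So $u\mapsto\pi_u$ is Lipschitz in TV. The $j$-dimensional marginals satisfy $\pi_{u,j}=\pi_u\otimes Q_u\otimes\cdots\otimes Q_u$, and each kernel factor moves by at most $L|u-v|$. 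Therefore $\|\pi_{u,j}-\pi_{v,j}\|_{\operatorname{TV}}\le (C'+(j-1)L)|u-v|$, which gives continuity in total variation.

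\textbf{Step 3 (the array is close to the frozen chain).} The law of $X_{n,k}$ is $\pi_0Q_{1/n}\cdots Q_{k/n}$. Compare it with $\pi_{k/n}$ by telescoping along the chain:
$$\pi_{k/n}-\pi_0Q_{1/n}\cdots Q_{k/n}=\sum_{i\le k}\big(\pi_{i/n}-\pi_{(i-1)/n}Q_{i/n}\big)Q_{(i+1)/n}\cdots Q_{k/n}.$$
Write $\pi_{i/n}-\pi_{(i-1)/n}Q_{i/n}=(\pi_{i/n}-\pi_{(i-1)/n})Q_{i/n}$, which is a signed measure of mass zero with TV norm at most $C''/n$ by Step 2. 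The uniform contraction of Step 1 for inhomogeneous products then damps the $i$-th term by $C\rho^{k-i}$. Summing the geometric series gives $\|\pi^{(n)}_{k,1}-\pi_{k/n}\|_{\operatorname{TV}}\le C_1/n$.

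For $j\ge2$, the joint law of $(X_{n,k},\dots,X_{n,k+j-1})$ is the marginal of $X_{n,k}$ followed by the kernels $Q_{(k+1)/n},\dots,Q_{(k+j-1)/n}$. Replacing each kernel by $Q_{k/n}$ costs at most $L(j-1)/n$ per kernel, and replacing the initial marginal costs $C_1/n$. This gives $\|\pi^{(n)}_{k,j}-\pi_{k/n,j}\|_{\operatorname{TV}}\le C_j/n$. Finally, Step 2 lets us pass from $k/n$ to an arbitrary $u$ at cost $C|u-k/n|$, which yields the stated bound and local stationarity.

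\textbf{Main obstacle.} The delicate point is the uniform contraction for the inhomogeneous products in Step 3. The clean way to get it is to bound the Dobrushin coefficient of $Q_{(i+1)/n}\cdots Q_{(i+m)/n}$ by $r/2+mL\cdot m/n$, which is uniformly less than $1$ for $n\ge n_0$. One then iterates over blocks of length $m$ and absorbs the leftover partial block, and the finitely many $n<n_0$, into the constants $C_j$.
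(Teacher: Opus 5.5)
The paper gives no proof of this statement; it quotes Truquet's Theorem~1. Your argument is correct and self-contained, and it reuses ingredients the paper proves elsewhere. Your Step~2 is the decomposition $\pi_u-\pi_v=(\pi_u-\pi_v)Q_u^k+\pi_v(Q_u^k-Q_v^k)$ from Lemma~\ref{Lipschitz invariant measure lemma}, where the paper simply takes $k=m$. Your block estimate for inhomogeneous products is the telescoping argument of the lemma on products of $M$ consecutive operators at the start of the section on products of operators; you phrase it through Dobrushin coefficients rather than as operator-norm closeness to $\Pi_{k/n}$.

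The step you flag as the main obstacle can be avoided by freezing the parameter at the target $u$. Set $e_k=\|\pi^{(n)}_{k,1}-\pi_u\|_{\operatorname{TV}}$. Using $\pi_u=\pi_uQ_u^m$ and the contraction of $Q_u^m$ on mass-zero measures, you get for $k\geq m$
\[
e_k\le r\,e_{k-m}+\sum_{i=k-m+1}^{k}L|u-i/n|\le r\,e_{k-m}+mL\big(|u-k/n|+m/n\big).
\]
Iterating $\lfloor k/m\rfloor$ times gives $e_k\le C(|u-k/n|+1/n)$ directly. The geometric weights $r^s$ absorb the index drift $sm/n$. The leftover initial term is $r^{\lfloor k/m\rfloor}e_\ell$ with $\ell<m$, and it is $O\big(r^{k/m}(u+1/n)\big)$ by Lipschitz continuity of $u\mapsto\pi_u$ at the base. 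Since $u\le|u-k/n|+k/n$ and $k\,r^{k/m}$ is bounded, this term is also $O(|u-k/n|+1/n)$. This route uses only the homogeneous contraction of $Q_u^m$, needs no contraction for inhomogeneous products, and handles general $u$ in one pass instead of going through $u=k/n$ first.

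Your route is somewhat longer. In exchange, it produces the uniform geometric contraction of inhomogeneous products, which is the kind of estimate the paper relies on later.
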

Thus, the chains considered in this paper are locally stationary.
\subsection{Limit theorems}

\begin{theorem}[Asymptotic variance and mean]\label{Var}
Make Assumptions \ref{MainAssm} (i)-(iv).
\vskip0.1cm

(1) For $u\in[0,1]$ set 
$$
b_u=\int_E g(u,x)d\pi_u(x),
$$
and let
$b=\int_{0}^1b_u\,du=\int_{0}^1\int_E g(u,x)d\pi_u(x)du$. Then
$$
\lim_{n\to\infty}\frac1n\mathbb E[S_n]=b.
$$
Additionally,
$$
\sup_n \left|\mathbb E[S_n]-nb\right|<\infty.
$$
\vskip0.1cm
(2) Let  $(Y_{j,u})_{j=0}^\infty$ be the stationary Markov chain with transition operator $Q_u$, and for $u\in[0,1]$, set 
$$
a_u^2=\lim_{n\to\infty}\frac1n\operatorname{Var}\left(\sum_{j=1}^n g(u,Y_{j,u})\right)
$$
$$
=\int_E \tilde g^2(u,x)d\pi_u(x)+2\sum_{j=1}^\infty\int_E\tilde g(u,x)\left(Q_u^j(\tilde g(u,\cdot))\right)(x)d\pi_u(x),
$$
where $\tilde g(u,x)=g(u,x)-\int g(u,y)d\pi_u(y)$.
Let
$a^2=\int_{0}^1a_u^2\,du$. Then
$$
\lim_{n\to\infty}\frac1n\operatorname{Var}(S_n)=a^2.
$$
Moreover, 
$$
\sup_n\left|\operatorname{Var}(S_n)-na^2\right|<\infty
$$
and 
$$
\sup_n\left|\mathbb E[(S_n-bn)^2]-na^2\right|<\infty.
$$
Finally, when Assumption \ref{MainAssm} (v) holds (even only for small $t\not=0$), we have $a^2>0$ (in fact, $a_u^2>0$ for all $u$).
\end{theorem}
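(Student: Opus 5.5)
We prove both parts directly with covariance decay estimates, using Theorem \ref{Tru} and the uniform Doeblin condition in Assumption \ref{MainAssm}(ii). First, (ii) gives uniform geometric ergodicity: for every $u$ and every $f\in\mathfrak B$,
\[
\|Q_u^k f-\pi_u(f)\|_\infty\le C\rho^k\|f\|_\infty,
\qquad \rho=r^{1/m}.
\]
The same bound holds for the inhomogeneous products $Q_{(j+1)/n}\cdots Q_{(j+k)/n}$. Indeed, Assumption \ref{MainAssm}(i) makes $m$-step blocks of the array close to $Q_u^m$ up to $O(m^2L/n)$. More robustly, (ii) applied blockwise with frozen parameter, combined with the Dobrushin coefficient, gives a coefficient $\le r+O(1/n)<1$ for large $n$. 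Hence the conditional expectations of centered functions decay geometrically, uniformly in $n$. We also need $\|\pi_u-\pi_v\|_{TV}\le C|u-v|$. This follows from $\pi_u-\pi_v=\pi_u(Q_u^N-Q_v^N)+(\pi_uQ_v^N-\pi_vQ_v^N)$: the first term is $O(NL|u-v|)$ and the second is at most $C\rho^N\|\pi_u-\pi_v\|$. Choosing $N$ fixed and large gives the bound. Consequently $u\mapsto b_u$ is Lipschitz, using (iv).

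\emph{Mean.} Write $\mathbb E[S_n]=\sum_k \mathbb E g(k/n,X_{n,k})$. Theorem \ref{Tru} with $j=1$ and $u=k/n$ gives $\|\pi^{(n)}_{k,1}-\pi_{k/n}\|_{TV}\le C/n$, so $|\mathbb E[S_n]-\sum_k b_{k/n}|\le CK$. A Riemann sum of the Lipschitz function $b_u$ differs from $n\int_0^1 b_u\,du$ by $O(1)$. This proves (1).

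\emph{Variance.} Expand
\[
\operatorname{Var}(S_n)=\sum_k \operatorname{Var}(g_k)+2\sum_{k}\sum_{l\ge1}\operatorname{Cov}(g_k,g_{k+l}),
\qquad g_k=g(k/n,X_{n,k}).
\]
The uniform mixing above gives $|\operatorname{Cov}(g_k,g_{k+l})|\le C\rho^l$, so truncating the inner sum at $l\le \ell$ costs $O(n\rho^\ell)$ per row overall. That is fine only if the truncation error is summable uniformly in $n$. It is, because $\sum_{l>\ell}$ gives $C\rho^\ell$ per $k$, and we will instead compare term by term without truncation.

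For fixed $k,l$, write
\[
\operatorname{Cov}(g_k,g_{k+l})=\mathbb E\bigl[\bar g_k\,(Q_{(k+1)/n}\cdots Q_{(k+l)/n}\bar g_{k+l})(X_{n,k})\bigr],
\]
with centering taken under the true marginals. Now compare with the frozen quantity $c_u(l)=\int\tilde g(u,\cdot)\,Q_u^l\tilde g(u,\cdot)\,d\pi_u$ at $u=k/n$. The product of operators differs from $Q_u^l$ by at most $L l^2/n$. The functions $g(\cdot+l/n)$ differ from $g(u)$ by $C_g l/n$. The marginals and centerings differ by $C/n$ (Theorem \ref{Tru} and the Lipschitz property of $\pi_u$). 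Combining these with geometric decay, the difference is bounded by $\min(C\rho^l,\,C(1+l^2)/n)\le C\rho^{l/2}(1+l^2)/n$. Summing over $l$ and then over $k$, the total error is $O(1)$. Also $a_u^2=c_u(0)+2\sum_l c_u(l)$ is Lipschitz in $u$ by the same perturbation estimates. The Riemann sum $\sum_k a^2_{k/n}$ is therefore $na^2+O(1)$.

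Boundary effects need separate care: rows with $k+l>n$ drop terms of size $\sum_{l>n-k}\rho^l$, which sum to $O(1)$. The remaining claim, $\mathbb E[(S_n-bn)^2]-na^2=O(1)$, follows from the variance bound plus $(\mathbb E S_n-bn)^2=O(1)$. The main obstacle is organizing this error bookkeeping so that every error is $O(1/n)$ times a geometrically summable weight; the $\min$ trick handles it.

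\emph{Positivity.} Fix $u$ and suppose $a_u^2=0$. The standard Gordin/Poisson argument gives $\tilde g(u,\cdot)=h-Q_uh$ $\pi_u$-a.e., with $h=\sum_j Q_u^j\tilde g$ bounded. Hence
\[
g(u,Y_1)=b_u+h(Y_0)-h(Y_1)+(\text{martingale increment with zero variance}),
\]
i.e. $g(u,y)=b_u+H(x)-H(y)$ for $Q_u(x,\cdot)$-a.e. $y$, with $H=Q_uh$. Then $Q_{u,it}(e^{-itH})=e^{itb_u}e^{-itH}$ on the support of $\pi_u$. One must extend this eigenfunction relation from $\pi_u$-a.e. to a genuine eigenfunction in $\mathfrak B$; one can use $Q_u^m$ applied to it, since Doeblin forces all mass into the relevant set geometrically. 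This produces a spectral value of modulus one, so $r(Q_{u,it})=1$ for all small $t\ne0$, contradicting (v).
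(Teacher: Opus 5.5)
Your route is different from the paper's. The paper reads everything off the expansion $\Lambda_n(z)=\sum_k\ln\lambda(k/n,z)+O(1)$ given by the parametric Perron--Frobenius theorem, plus Cauchy estimates. You use a direct covariance expansion with mixing, a Lipschitz bound for $\pi_u$, and a coboundary argument for positivity. Your treatment of the mean is fine, and so, up to small points noted at the end, is the positivity argument. The genuine gap is in the $O(1)$ control of the variance.

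\textbf{The gap.} The inequality
\[
\min\bigl(C\rho^l,\,C(1+l^2)/n\bigr)\le C\rho^{l/2}(1+l^2)/n
\]
is false. For $l=\lceil\log n/\log(1/\rho)\rceil$ the left side is of order $1/n$, while the right side is $O\bigl((\log n)^2n^{-3/2}\bigr)$. Summed honestly, $\sum_{l\ge0}\min\bigl(\rho^l,(1+l^2)/n\bigr)\asymp(\log n)^3/n$, so after summing over $k$ you only get
\[
\operatorname{Var}(S_n)=\sum_k a^2_{k/n}+O\bigl((\log n)^3\bigr).
\]
This still gives $\frac1n\operatorname{Var}(S_n)\to a^2$. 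It does not give $\sup_n|\operatorname{Var}(S_n)-na^2|<\infty$, nor the bound for $\mathbb E[(S_n-bn)^2]$.

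The claim that $u\mapsto a_u^2$ is Lipschitz ``by the same perturbation estimates'' has the same defect. Since $\sum_l\min\bigl(\rho^l,(1+l^2)|u-v|\bigr)\asymp|u-v|\log^3(1/|u-v|)$, the Riemann-sum error becomes $O((\log n)^3)$ rather than $O(1)$. No bookkeeping with a minimum of an $O(\rho^l)$ bound and an $O(\mathrm{poly}(l)/n)$ bound can repair this. In the paper the issue never arises: the whole sum over lags is packaged in $\lambda(u,z)$, whose Lipschitz dependence on $u$, uniformly on a complex disc, transfers to $a_u^2=\partial_z^2\ln\lambda(u,z)|_{z=0}$ by Cauchy's estimates.

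\textbf{What is missing.} You need a perturbation bound in which the $1/n$ error itself carries the geometric weight:
\[
|\operatorname{Cov}(g_k,g_{k+l})-c_{k/n}(l)|\le C(1+l^2)\rho^l/n.
\]
Your ingredients suffice to prove it. With $u=k/n$, write
\[
Q_{(k+1)/n}\cdots Q_{(k+l)/n}-Q_u^l=\sum_{i=1}^{l}Q_u^{i-1}\bigl(Q_{(k+i)/n}-Q_u\bigr)F_i,\qquad F_i=Q_{(k+i+1)/n}\cdots Q_{(k+l)/n}.
\]
\begin{itemize}
\item Since $(Q_{(k+i)/n}-Q_u)\mathbf 1=0$, the function $(Q_{(k+i)/n}-Q_u)F_i\tilde g_u$ has sup norm at most $C(i/n)\operatorname{osc}(F_i\tilde g_u)\le C(i/n)\rho^{l-i}$, by your inhomogeneous Dobrushin bound.
\item Because $\pi_u(\tilde g_u)=0$, pairing $Q_u^{i-1}$ of that function against $\tilde g_u\,d\pi_u$ gains a further factor $C\rho^{i-1}$.
\item Each term is therefore $O(i\rho^l/n)$, and the sum over $i$ is $O(l^2\rho^l/n)$.
\item The other substitutions each cost at most $O(l\rho^l/n)$: replacing $g((k+l)/n,\cdot)$ by $g(u,\cdot)$, the law of $X_{n,k}$ by $\pi_u$, and the centerings. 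Each acts on a quantity that is already $O(\rho^l)$ modulo constants, and constants integrate to zero against a centered function.
\end{itemize}
The same argument gives $|c_u(l)-c_v(l)|\le C(1+l^2)\rho^l|u-v|$. Alternatively, $a_u^2=2\pi_u(\tilde g_uh_u)-\pi_u(\tilde g_u^2)$ with $h_u=(I-Q_u+\Pi_u)^{-1}\tilde g_u$ is Lipschitz in $u$ because the resolvent is.

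\textbf{Two small points on positivity.} First, from $g(u,y)=b_u+H(x)-H(y)$ one gets $Q_{u,it}e^{itH}=e^{itb_u}e^{itH}$, not the same relation for $e^{-itH}$. Second, you do not need a genuine eigenfunction. The identity $|Q_{u,it}^ne^{itH}|=1$ holding $\pi_u$-a.e. already forces $\|Q_{u,it}^n\|_\infty\ge1$ for all $n$, hence $r(Q_{u,it})\ge1$.
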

We refer to Theorem \ref{HighThm} in Section \ref{Rem1} for computations of higher-order asymptotic moments of $S_n$, which we decided not to present in this section in order not to overload it. 
\begin{remark}
Our analysis shows that $a_u^2$ is continuous in $u$, and so $a^2>0$ even when $r(Q_{u,it})<1$ for all sufficiently small real $t\not=0$ and $u$ belonging to some subinterval of $[0,1]$.    
\end{remark}
\begin{remark}
Note that $b=\mathbb E[g(Z)]$, where $Z=(U,Y)$, where  $U$ is uniformly distributed on $[0,1]$, $Y$ is $E$-valued and the conditional law of $Y$ given $U=u$ is $\pi_u$. To describe $a$, denote $\tilde g(u,x)=g(u,x)-\int_E g(u,y)d\pi_u(y)$. Then 
$$
a^2=\lim_{n\to\infty}\frac1n\mathbb E\left[\left(\sum_{j=1}^n \tilde g(Z_j)\right)^2\right],
$$
where $Z_j=(U,V_j)$, $U$ is uniformly distributed on $[0,1]$, $V_j$ are $E$-valued and the conditional law of $(V_j)_{j=1}^\infty$ given $U=u$ coincides with the law of $(Y_{j,u})_{j=1}^\infty$. Thus, $a^2$ is the asymptotic variance of the partial sum $\sum_{j=1}^n \tilde g(Z_j)$.
 \end{remark}
 Our next result concerns optimal CLT rates.
\begin{theorem}[A Berry--Esseen theorem]\label{BE}
Under Assumptions \ref{MainAssm} (i)-(iv) and when $a>0$, 
there exists a constant $C>0$ such that for all $n$, 
$$
\sup_{t\in\mathbb R} \left|\mathbb P(S_n-bn\leq a\sqrt n t)-\Phi(t)\right|\leq C/\sqrt n,
$$
where $\Phi(t)=\frac1{\sqrt{2\pi}}\int_{-\infty}^te^{-\frac12 x^2}dx$.
\end{theorem}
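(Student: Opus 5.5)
We prove Theorem \ref{BE} with the Berry--Esseen smoothing inequality. Since $a>0$, we may take $T=\delta\sqrt n$ with $\delta>0$ small, and it is enough to show
$$
\int_{-\delta\sqrt n}^{\delta\sqrt n}\frac{\bigl|\mathbb E[e^{it(S_n-bn)/(a\sqrt n)}]-e^{-t^2/2}\bigr|}{|t|}\,dt\le \frac{C}{\sqrt n}.
$$
Put $z=it/(a\sqrt n)$, so $|z|\le \delta/a$ is small. By the Lemma, the characteristic function equals $\langle \mu, Q_{1/n,z}\cdots Q_{n/n,z}\mathbf 1\rangle$. No non-arithmetic condition is needed, because only small $|z|$ occurs.

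\textbf{Step 1 (parametric perturbation).} We use the parametric Perron--Frobenius theorem (Theorem \ref{RPF theorem single operator}), proved via complex projective metrics. For $|z|$ small, uniformly in $u\in[0,1]$, it gives a decomposition $Q_{u,z}=\lambda_u(z)h_{u,z}\otimes\nu_{u,z}+N_{u,z}$ with $\lambda_u(0)=1$, $h_{u,0}=\mathbf 1$, $\nu_{u,0}=\pi_u$. All these objects are analytic in $z$ with derivative bounds uniform in $u$, and $\|N_{u,z}^k\|\le C\rho^k$ with $\rho<1$ uniform in $u$. By Assumption \ref{MainAssm} (i),(iv), $\|Q_{u,z}-Q_{v,z}\|\le C|u-v|$ uniformly for small $|z|$. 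The same uniform contraction then makes $u\mapsto\lambda_u(z),h_{u,z},\nu_{u,z}$ Lipschitz. Moreover $\Lambda_u(z):=\log\lambda_u(z)$ satisfies $\Lambda_u'(0)=b_u$, $\Lambda_u''(0)=a_u^2$, and $|\Lambda_u'''(z)|\le C$.

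\textbf{Step 2 (product along the array).} We compare the product $\prod_k Q_{k/n,z}$ with $\prod_k\lambda_{k/n}(z)$ times a rank-one projection. Rather than a single global perturbation, I plan a sequential approach in the style of \cite{dolgopyat2023berry,HK}. Since consecutive operators differ by $O(1/n)$ and the cones contract uniformly, we get
$$
Q_{1/n,z}\cdots Q_{n/n,z}\mathbf 1=\Bigl(\prod_{k=1}^n\tilde\lambda_{k,n}(z)\Bigr)\bigl(c_n(z)+O(\rho^n)\bigr).
$$
Here $\tilde\lambda_{k,n}(z)=\nu_{k-1}(Q_{k/n,z}h_k)$ are the "sequential" eigenvalues attached to the Lipschitz chain of eigenvectors, and $|c_n(z)-1|\le C|z|$. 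The key estimate is $|\tilde\lambda_{k,n}(z)-\lambda_{k/n}(z)|\le C|z|/n$. This holds because the sequential eigendata track the frozen ones up to $O(1/n)$, and at $z=0$ both equal $1$.

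\textbf{Step 3 (conclusion).} The previous steps give
$$
\sum_k\log\tilde\lambda_{k,n}(z)=\sum_k\Lambda_{k/n}(z)+O(|z|).
$$
Taylor expanding, and using Riemann-sum errors of order $O(1)$ coming from the Lipschitz property in $u$, we obtain
$$
\sum_k\Lambda_{k/n}(z)=z\bigl(bn+O(1)\bigr)+\tfrac{z^2}{2}\bigl(a^2n+O(1)\bigr)+O(n|z|^3).
$$
This is consistent with Theorem \ref{Var}. With $z=it/(a\sqrt n)$, the characteristic function of $(S_n-bn)/(a\sqrt n)$ becomes $e^{-t^2/2+O(|t|/\sqrt n+|t|^3/\sqrt n)}(1+O(|t|/\sqrt n))+O(\rho^n)$. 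For $|t|\le\delta\sqrt n$ and $\delta$ small, the real part of the exponent is at most $-t^2/4$. Hence the integrand is bounded by $C(1+t^2)e^{-t^2/4}/\sqrt n+O(\rho^n/|t|)$. The $\rho^n$ term is harmless: at $t=0$ the numerator vanishes, and it suffices to handle $|t|\ge n^{-1}$. This gives the $C/\sqrt n$ bound.

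The main obstacle is Step 2: proving the uniform-in-$n$ bound $|\tilde\lambda_{k,n}-\lambda_{k/n}|\le C|z|/n$, with the factor $|z|$ and not merely $1/n$. The factor $|z|$ is essential, since errors of size $n\cdot\frac1n=O(1)$ without it would destroy convergence. I would get it by writing the difference as an analytic function of $z$ that vanishes at $z=0$, and using Cauchy estimates on a fixed small disc where the bound $O(1/n)$ holds uniformly, via the projective-metric contraction.
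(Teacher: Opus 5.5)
Your proposal is correct and follows essentially the paper's route. The paper also expands $\Lambda_n(z)=\ln\mathbb E[e^{zS_n}]$ on a fixed complex disc via a product formula for $\prod_k Q_{k/n,z}$, and Lemma \ref{GrowthLemma} deduces $|\Lambda_n'''|\le cn$ from it; the paper's formula is \eqref{Recall}, written with frozen eigendata and overlap factors $\langle\varphi(k/n,z),\upsilon((k+1)/n,z)\rangle$, where you use sequential eigenvalues $\tilde\lambda_{k,n}$. The remaining difference is packaging: the paper cites \cite[Proposition 24]{dolgopyat2023berry} for the smoothing step under the normalization $(S_n-\mathbb E[S_n])/\sqrt{\operatorname{Var}(S_n)}$, then uses Theorem \ref{Var} and \cite[Section 3.2]{hafouta2022non} to pass to $bn$ and $a\sqrt n$, whereas you run the Esseen argument by hand with that normalization built in through the $O(1)$ errors in the linear and quadratic Taylor coefficients.
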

The following result concerns the LLT.
\begin{theorem}[A local limit theorem]\label{LLT}
Under Assumptions \ref{MainAssm} (i)-(v), 
for every continuous function $G:\mathbb R\to\mathbb R$ with compact support or an indicator of a bounded interval, we have
$$
\lim_{n\to\infty}\sup_{v\in\mathbb R}\left|a\sqrt {2\pi n}\mathbb E[G(S_n-v)]-\left(\int G(x) dx\right)e^{-\frac{(v-bn)^2}{2na^2}}\right|=0.
$$
\end{theorem}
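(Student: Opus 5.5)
We follow the standard Fourier route. By the usual smoothing argument (approximating indicators and continuous compactly supported $G$ from above and below by functions with compactly supported Fourier transform, as in Breiman's or Stone's approach) it suffices to prove the claim for $G$ with $\hat G\in C_c(\mathbb R)$, say $\operatorname{supp}\hat G\subset[-T,T]$. Then by Fourier inversion and the lemma expressing characteristic functions through twisted operators,
\begin{equation*}
\mathbb E[G(S_n-v)]=\frac1{2\pi}\int_{-T}^{T}\hat G(t)e^{-itv}\,\big\langle \pi_0,\,Q_{1/n,it}Q_{2/n,it}\cdots Q_{n/n,it}\mathbf 1\big\rangle\,dt .
\end{equation*}
We split the integral into $|t|\le\delta$ and $\delta\le|t|\le T$ for a small $\delta>0$ to be chosen.

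\textbf{Step 1: large frequencies.} For $\delta\le|t|\le T$ we must show $\|Q_{1/n,it}\cdots Q_{n/n,it}\|_\infty\le C\rho^n$ uniformly, with $\rho<1$. By Assumption \ref{MainAssm}(v) we have $r(Q_{u,it})<1$ for each $(u,t)$. The map $(u,t)\mapsto Q_{u,it}$ is norm-continuous by (i), (iii) and (iv). Hence by compactness of $[0,1]\times\{\delta\le|t|\le T\}$ there are $N$ and $\theta<1$ with $\|Q_{u,it}^N\|_\infty\le\theta$ on this set, and by upper semicontinuity this bound persists in a neighborhood. We now group the product into blocks of length $N$. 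Inside a block the parameters $k/n$ differ by at most $N/n$, so replacing the block by $Q_{u,it}^N$ costs $O(N^2/n)$ by the Lipschitz bounds. Each block therefore has norm at most $\theta+O(1/n)\le\theta'<1$, and the total contribution is $O(\theta'^{n/N})=o(n^{-1/2})$.

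\textbf{Step 2: small frequencies.} For $|t|\le\delta$ we use the parametric Perron--Frobenius theorem (Theorem \ref{RPF theorem single operator}). It gives, uniformly in $u$, a decomposition $Q_{u,it}=\lambda_u(it)\,h_{u,it}\otimes\nu_{u,it}+R_{u,it}$, in which $\lambda_u$ is analytic in $t$, $\lambda_u(it)=\exp(itb_u-\tfrac12a_u^2t^2+O(|t|^3))$, and all objects are Lipschitz in $u$. Following the arguments already used for Theorems \ref{Var} and \ref{BE} (which we may assume), we obtain
\begin{equation*}
\langle\pi_0,Q_{1/n,it}\cdots Q_{n/n,it}\mathbf 1\rangle=\exp\Big(itbn-\tfrac12a^2nt^2+O(n|t|^3)+O(|t|)\Big)\big(1+O(|t|)\big)+O(\rho^n),
\end{equation*}
where the $O(|t|)$ terms in the exponent are uniform in $n$. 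The Riemann-sum error $\sum_k\log\lambda_{k/n}(it)-n\int_0^1\log\lambda_u(it)\,du=O(|t|)$ follows from the Lipschitz dependence of $\log\lambda_u(it)$ on $u$, together with $\log\lambda_u(it)=O(|t|)$ and its $u$-derivative being $O(|t|)$. This last point holds because $\lambda_u(0)=1$ for all $u$.

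Substituting $t=s/(a\sqrt n)$ and using dominated convergence (valid since $\delta$ is small enough that the cubic term is dominated by $\tfrac14a^2nt^2$), we get
\begin{equation*}
a\sqrt{2\pi n}\cdot\frac1{2\pi}\int_{|t|\le\delta}\cdots\,dt=\frac{1}{\sqrt{2\pi}}\int\hat G\big(s/(a\sqrt n)\big)e^{-is(v-bn)/(a\sqrt n)}e^{-s^2/2}\,ds+o(1),
\end{equation*}
uniformly in $v$. This tends to $\hat G(0)e^{-(v-bn)^2/(2na^2)}=\big(\int G\big)e^{-(v-bn)^2/(2na^2)}$, uniformly in $v$, by continuity of $\hat G$ at $0$.

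\textbf{Main obstacle.} The main obstacle is Step 1: turning the pointwise spectral condition $r(Q_{u,it})<1$ into a uniform contraction of the inhomogeneous product. The difficulty is that $N$ must be fixed independently of $n$ and the block-freezing error must be absorbed, and this needs the compactness and semicontinuity argument to be done carefully. The non-arithmetic assumption and the positivity $a>0$ from Theorem \ref{Var} enter exactly here and in Step 2, respectively.
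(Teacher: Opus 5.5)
Your proof is correct. Step 1 is exactly the paper's Lemma \ref{t cpt}, but your small-frequency argument takes a genuinely different route.

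\textbf{How the paper handles small $t$.} It never expands $\mathbb E[e^{itS_n}]$ near $0$.
\begin{itemize}
\item It uses only the crude cumulant bounds $|\Lambda_n^{(m)}(it)|\le c_m n$ of Lemma \ref{GrowthLemma}.
\item Together with $a>0$, it feeds these into \cite[Corollary 28]{dolgopyat2023berry} to get the Gaussian domination \eqref{ff}, $|\mathbb E[e^{itS_n}]|\le Ce^{-ct^2n}$ for $|t|\le\delta_0$.
\item It then obtains the LLT from the abstract criterion \cite[Theorem 2.2.3]{HK}. The remaining input there is the CLT with the correct centering and scaling, supplied by the Berry--Esseen Theorem \ref{BE}.
\end{itemize}

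\textbf{How you handle small $t$.} You derive the asymptotics by hand: $\Lambda_n(it)=itbn-\tfrac12a^2nt^2+O(n|t|^3)+O(|t|)$ from \eqref{Recall} and Riemann sums, followed by dominated convergence after substituting $t=s/(a\sqrt n)$.

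\textbf{What each route buys.} The paper's route is shorter and modular. It needs no sharp information about $\Lambda_n$ beyond growth bounds, at the cost of going through Berry--Esseen and two external results. Yours is self-contained and bypasses Berry--Esseen. However, it needs the sharper fact that every error term that is uniform in $n$ vanishes as $t\to0$.

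\textbf{One point to make explicit.} You justify this vanishing only for the Riemann-sum error of $u\mapsto\log\lambda(u,it)$. The same Schwarz-lemma argument is needed for the other factors in \eqref{Recall}.
\begin{itemize}
\item Since $\upsilon(u,0)=\mathbf 1$ for every $u$, the map $u\mapsto\upsilon(u,it)$ is Lipschitz with constant $O(|t|)$. Hence $\prod_{k}\langle\varphi(k/n,it),\upsilon((k+1)/n,it)\rangle=1+O(|t|)$ uniformly in $n$.
\item The per-factor bound $1+O(1/n)$ used in Lemma \ref{GrowthLemma} would only give $e^{O(1)}$, which would break your pointwise limit.
\item Similarly, $U_n(0)=0$ gives $\|U_n(it)\|_\infty=O(|t|/n)$.
\end{itemize}
With these observations your argument is complete.
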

Our next result is about first-order correction terms in the CLT.
\begin{theorem}[First-order Edgeworth expansions]\label{Edge0}
Under Assumptions \ref{MainAssm} (i)-(v),
there exists a polynomial $P_1$ of degree $3$ such that  
$$
\sup_{t\in\mathbb R}\left|\mathbb P(S_n-bn\leq at\sqrt n)-\Phi(t)-n^{-1/2}P_{1}(t)\varphi(t)\right|=o(n^{-1/2}),
$$
where $\varphi(t)=\frac{1}{\sqrt{2\pi}}e^{-\frac12t^2}$.
\end{theorem}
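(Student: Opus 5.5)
We will prove Theorem \ref{Edge0} with Esseen's smoothing inequality, treating $\mathbb E[e^{itS_n}]$ separately in three frequency ranges. Write $\bar S_n=(S_n-bn)/(a\sqrt n)$ and let $F_n$ be its distribution function. Let $G_n(t)=\Phi(t)+n^{-1/2}P_1(t)\varphi(t)$, and let $\hat G_n(\xi)=e^{-\xi^2/2}(1+n^{-1/2}c_3(i\xi)^3)$ be the Fourier transform of $dG_n$. Here $c_3$ is the normalized asymptotic third cumulant coming from Theorem \ref{HighThm}, so that $P_1(t)=-\frac{c_3}{6}(t^2-1)$ up to the normalization convention. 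Fix a large constant $A$. Esseen's inequality gives
\[
\sup_t|F_n(t)-G_n(t)|\le C\int_{-A\sqrt n}^{A\sqrt n}\frac{|\mathbb E[e^{i\xi\bar S_n}]-\hat G_n(\xi)|}{|\xi|}\,d\xi+\frac{C}{A\sqrt n}.
\]
Since $A$ is arbitrary, it suffices to show that the integral is $o(n^{-1/2})$ for each fixed $A$. We split it into the ranges $|\xi|\le \delta\sqrt n$ and $\delta\sqrt n\le|\xi|\le A\sqrt n$. In terms of $t=\xi/(a\sqrt n)$, these are $|t|\le\delta/a$ and $\delta/a\le|t|\le A/a$.

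\emph{Small frequencies.} We use the parametric Perron--Frobenius theorem (Theorem \ref{RPF theorem single operator}). For $|z|\le\delta_0$ it gives $Q_{u,z}=\lambda_u(z)h_{u,z}\otimes\nu_{u,z}+N_{u,z}$, with the spectral data analytic in $z$, Lipschitz in $u$, and with uniform spectral gap. We then perform the same local-block approximation used for Theorem \ref{Var} and Theorem \ref{BE}. Group the product $Q_{1/n,z}\cdots Q_{n/n,z}$ into blocks on which $u$ varies by $O(1/n)$ per step, and compare it with $\prod_{k}\lambda_{k/n}(z)$. This should give
\[
\mathbb E[e^{zS_n}]=\exp\Big(\sum_{k=1}^n\Lambda_{k/n}(z)+\psi_n(z)\Big),
\]
where $\Lambda_u=\log\lambda_u$ and $\psi_n$ is analytic on $|z|\le\delta_0$ and bounded uniformly in $n$. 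Cauchy estimates then bound the cumulants of $S_n$. Taylor expanding $\sum_k\Lambda_{k/n}(z)$ around $0$, and using the Riemann-sum error $O(1)$ that Lipschitz continuity in $u$ provides, we get that $\sum_k\Lambda_{k/n}(it)-itbn+\tfrac12 a^2nt^2$ equals $-\tfrac{i}{6}\kappa_3 n t^3+O(nt^4+|t|)$, with $\kappa_3=\int_0^1\Lambda_u'''(0)\,du$. The $O(|t|)$ term comes from $\psi_n(it)-\psi_n(0)$ and the Riemann error. After dividing by $|\xi|$ it contributes $O(n^{-1/2})\cdot\int e^{-c\xi^2}$. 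This is the point that needs care: the term is of exact order $n^{-1/2}$, and it must be absorbed into $P_1$. So we must identify $\psi_n'(0)$ together with the boundary Riemann corrections, that is, the bounded difference $\mathbb E S_n-bn$, which by Theorem \ref{Var} is $O(1)$. We will show that it converges; its limit yields a linear-in-$t$ correction, which is why $P_1$ has degree $3$ rather than the classical $t^2-1$ form alone. Given that convergence, the standard expansion $e^{x}=1+x+O(x^2)$ on $|\xi|\le\delta\sqrt n$ yields an integrand bound of $o(n^{-1/2})(|\xi|^2+|\xi|^{5})e^{-c\xi^2}$ plus a bound of the form $n^{-1}\mathrm{poly}(\xi)e^{-c\xi^2}$.

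\emph{Intermediate frequencies.} This is the main obstacle. For $\delta/a\le|t|\le A/a$, Assumption \ref{MainAssm}(v), together with continuity of $(u,t)\mapsto Q_{u,it}$ in operator norm (from (i) and (iv)) and compactness of $[0,1]\times\{\delta/a\le|t|\le A/a\}$, gives some $N$ and $\rho<1$ with $\|Q_{u,it}^N\|_\infty\le\rho$ uniformly. We will then show that $\|Q_{u_1,it}\cdots Q_{u_N,it}-Q_{u,it}^N\|_\infty\le CN|u_j-u|$. This comparison is of the same kind as Assumption \ref{MainAssm}(i). Splitting the product into $n/N$ blocks, on each of which the parameters differ by $O(N/n)$, gives the bound $|\mathbb E e^{itS_n}|\le(\rho+CN^2/n)^{n/N}\le e^{-cn}$. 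The intermediate range therefore contributes $O(\log n\, e^{-cn})$. Combining the three ranges yields the $o(n^{-1/2})$ bound, and we conclude the proof by letting $A\to\infty$.
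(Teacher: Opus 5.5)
Your frequency splitting follows the paper's route. Lemma \ref{t cpt} is exactly your intermediate-frequency block argument, Lemma \ref{GrowthLemma} supplies the small-frequency control from \eqref{Recall}, and \cite[Proposition 25]{dolgopyat2023berry} packages the Esseen-smoothing step, including truncation at $A\sqrt n$ with $A\to\infty$. The paper then recenters from $\mathbb E[S_n]$ to $bn$ by citing \cite[Section 5.1]{hafouta2026non}. You work directly with $(S_n-bn)/(a\sqrt n)$ and correctly find that $\beta_n:=\mathbb E[S_n]-bn$ enters the characteristic function as $i\xi\beta_n/(a\sqrt n)$, a term of exact order $n^{-1/2}$. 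But you only announce ``We will show that it converges'', and this is a genuine gap. Theorem \ref{Var} and the Lipschitz Riemann-sum bound give only $\beta_n=O(1)$. So as written you prove the expansion with the $n$-dependent polynomial $-c_3(t^2-1)-\beta_n/a$, not with a fixed $P_1$. Since dependence on $u$ is only Lipschitz, the limit cannot be read off from a Taylor expansion in $u$.

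One way to close it is to write
\[
\beta_n=\Big(\sum_{k=1}^n b_{k/n}-n\int_0^1 b_u\,du\Big)+\sum_{k=1}^n\big(\mu^{(n)}_k-\pi_{k/n}\big)(g_{k/n}),
\]
where $\mu^{(n)}_k$ is the law of $X_{n,k}$; the second sum is your $\psi_n'(0)$.

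\emph{First term.} Since $u\mapsto b_u$ is Lipschitz, hence absolutely continuous, it equals $\int_0^1 b'(s)(ns-\lfloor ns\rfloor)\,ds$. This tends to $\frac12(b_1-b_0)$ by the Riemann--Lebesgue lemma.

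\emph{Second term.} Write $\mu^{(n)}_k-\pi_{k/n}=\sum_{j\le k}(\pi_{(j-1)/n}-\pi_{j/n})Q_{j/n}\cdots Q_{k/n}$. The increments have zero mass and total variation $O(1/n)$, so uniform contraction reduces the sum to $\sum_j(\pi_{(j-1)/n}-\pi_{j/n})(R_{j/n})+o(1)$. Here $R_u=\sum_{\ell\ge1}(Q_u^\ell-\Pi_u)g_u$, which is Lipschitz in $u$. Summation by parts turns this into $\pi_0(R_0)-\pi_1(R_1)+\sum_j\pi_{(j-1)/n}(R_{j/n}-R_{(j-1)/n})+o(1)$. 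The last sum converges to $\int_0^1\pi_w(\partial_wR_w)\,dw$. To see this, write $R_{j/n}(x)-R_{(j-1)/n}(x)=\int_{(j-1)/n}^{j/n}\partial_wR_w(x)\,dw$ pointwise; the a.e.\ derivative is bounded by the Lipschitz constant. Then apply Fubini and use $\|\pi_{(j-1)/n}-\pi_w\|_{\operatorname{TV}}=O(1/n)$.

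Your explanation of the degree is also wrong. $i\xi e^{-\xi^2/2}$ is the Fourier--Stieltjes transform of $-\varphi'(t)\,dt$, so the limit $\beta$ of $\beta_n$ adds the constant $-\beta/a$ to $P_1$, not a cubic term. Your argument therefore yields $P_1(t)=-\frac{\gamma_3}{6a^3}(t^2-1)-\frac{\beta}{a}$, of degree at most $2$. Two admissible polynomials would satisfy $\sup_t|(P_1-\tilde P_1)(t)\varphi(t)|=o(1)$, so the polynomial is unique. Hence ``degree $3$'' in the statement can only be read as ``degree at most $3$''.
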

In fact, we can take $P_1(t)=\frac16(t^3-t)\nu_3$, where $\nu_3=\lim_{n\to\infty}\frac1n\mathbb E[(S_n-bn)^3]$ (the limit $\nu_3$ exists by Theorem \ref{HighThm}).
The following two results concern high-order correction terms in the CLT.
\begin{theorem}[High-order Edgeworth expansions I]\label{Edge1}
Let  Assumptions \ref{MainAssm} (i)-(v)  be in force. There are polynomials $P_{j}$ with the following properties.
Let $d>1$ be an integer. Suppose also that
\vskip0.1cm 
(1) there is $a_n\in(0,1]$ such that for all $k\leq n$,
 $$
Q_{k/n}(x,\Gamma)\geq a_n\mathbb P(X_{k,n}\in\Gamma)
 $$
 for all $x$ and all measurable sets $\Gamma$. 
 \vskip0.1cm
 (2) for all $A,B>0$,
 $$
\int_{A\leq |t|\leq Bn^{(d-1)/2}}\exp\left(-\frac{a_n^4}{16}\sum_{k=1}^n\alpha_{k/n}(t)\right)/|t|\,dt=o(n^{-d/2}),
 $$   
 where 
 $$
 \alpha_u(t)=1-\left|\int e^{itg(u,x)}d\pi_u(x)\right|^2.
 $$
\vskip0.1cm
Then 
$$
\sup_{t\in\mathbb R}\left|\mathbb P(S_n-bn\leq at\sqrt n)-\Phi(t)-\sum_{j=1}^{d}n^{-j/2}P_{j}(t)\varphi(t)\right|=o(n^{-d/2}).
$$
\end{theorem}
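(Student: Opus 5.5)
We would follow the standard Esseen smoothing route, as in \cite{dolgopyat2023berry} and \cite{PelLLT}. Write $\phi_n(t)=\mathbb E[e^{it(S_n-bn)/(a\sqrt n)}]$. By Esseen's inequality with truncation level $T=Bn^{(d-1)/2}$ (for large $B$), the claimed $o(n^{-d/2})$ bound reduces to two estimates. The first is
$$\int_{|t|\leq \delta\sqrt n}\frac{|\phi_n(t)-\hat{\mathcal E}_{d,n}(t)|}{|t|}\,dt=o(n^{-d/2}),$$
where $\hat{\mathcal E}_{d,n}$ is the Fourier transform of $\Phi+\sum_{j\le d}n^{-j/2}P_j\varphi$. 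The second is that $\int_{\delta\sqrt n\le|t|\le T}|\phi_n(t)|/|t|\,dt=o(n^{-d/2})$. We do not expect the middle range $A\le |s|\le \delta$ in the unnormalized variable $s=t/(a\sqrt n)$ to need a separate argument; it should be covered by the same bound as the large range.

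\textbf{Small frequencies.} For $|s|\le\delta$, we use the Lemma expressing $\mathbb E[e^{isS_n}]$ as $\langle\mu,Q_{1/n,is}\cdots Q_{1,is}\mathbf 1\rangle$. We then invoke the parametric Perron--Frobenius theorem (Theorem \ref{RPF theorem single operator}) for $z$ in a small complex disc, uniformly in $u$. Combining it with the Lipschitz assumptions (i) and (iv) and the sequential perturbation arguments of \cite{HK,dolgopyat2023berry}, we obtain
$$\log\mathbb E[e^{zS_n}]=\sum_{k=1}^n\Lambda_{k/n}(z)+R_n(z),$$
with $\Lambda_u=\log\lambda_u$ analytic in $z$ and $R_n$ analytic with bounded derivatives uniformly in $n$. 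Hence every cumulant of $S_n$ equals $n\kappa_j+O(1)$, where $\kappa_j=\int_0^1\Lambda_u^{(j)}(0)\,du$ by a Riemann-sum/Euler--Maclaurin argument; this is consistent with Theorem \ref{Var} and with the higher-moment result Theorem \ref{HighThm}. Taylor-expanding $\exp$ of the cumulant series to order $d+2$ produces the polynomials $P_j$, independent of (1)--(2). The usual estimate $|\phi_n(t)-\hat{\mathcal E}_{d,n}(t)|\le Cn^{-(d+1)/2}(|t|^{3(d+1)}+|t|)e^{-ct^2}$ on $|t|\le\delta\sqrt n$ then handles this range.

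\textbf{Large frequencies.} This is where hypotheses (1) and (2) enter, and it is the main obstacle. The goal is
$$|\mathbb E[e^{isS_n}]|\le C\exp\Bigl(-\tfrac{a_n^4}{16}\sum_k\alpha_{k/n}(s)\Bigr)$$
for all $s$, which (2) immediately turns into the required $o(n^{-d/2})$ after the change of variables. We plan to prove it by a block/Doeblin-splitting argument. Condition (1) lets us write $Q_{k/n}(x,\cdot)=a_n\mathbb P(X_{k,n}\in\cdot)+(1-a_n)R_k(x,\cdot)$. Conditioning on the pairs of steps where the "regeneration'' coin succeeds, each such step contributes a factor whose modulus involves $|\mathbb E e^{isg(k/n,X_{k,n})}|$. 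A Cauchy--Schwarz/symmetrization step, in the style of the proof for inhomogeneous chains in \cite{dolgopyat2023berry}, turns this into $1-c\,a_n^2(1-|\cdot|^2)$, and pairing steps costs another factor $a_n^2$. This is where the constant $a_n^4/16$ should come from.

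Finally, Theorem \ref{Tru} (with $j=1$) lets us replace the law of $X_{k,n}$ by $\pi_{k/n}$. The error is $O(1/n)$ in total variation, so $1-|\mathbb E e^{isg(k/n,X_{k,n})}|^2\ge\alpha_{k/n}(s)-C/n$. Summed over $k$, this changes the exponent by only $O(a_n^4)=O(1)$, which is absorbed into $C$. The delicate points are tracking the exact constant and making sure the estimate holds uniformly in $s$ up to $Bn^{(d-1)/2}$ with no loss growing in $n$; we would first try to replicate the argument of \cite{dolgopyat2023berry} verbatim, with $k$-dependent kernels.
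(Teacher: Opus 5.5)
Your architecture matches the paper's. You use Esseen smoothing. You treat small frequencies through the Perron--Frobenius representation of $\log\mathbb E[e^{zS_n}]$, which is the paper's \eqref{ii} and Lemma \ref{GrowthLemma} fed into \cite[Proposition 25]{dolgopyat2023berry}. You treat large frequencies with exactly Corollary \ref{PelCor} plus hypothesis (2).

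Some remarks on that part:
\begin{itemize}
\item You are right that (2), assumed for every $A>0$, already covers the intermediate range, so Lemma \ref{t cpt} is not needed.
\item You need not rederive the large-frequency bound. It is \cite[Proposition 10]{PelLLT} (Lemma \ref{PelLemma}), which applies as stated to each row $(X_{n,k})_{k\le n}$. It yields precisely the exponent $a_n^4/16$ built into (2); a home-made Doeblin argument with a worse constant would not connect to (2).
\item There is a slip in the truncation level. In the normalized variable $t$, Esseen's remainder $C/T$ is $o(n^{-d/2})$ only for $T=Bn^{d/2}$ with $B$ large. That corresponds to $|s|\le (B/a)n^{(d-1)/2}$, which is exactly the range in (2). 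With $T=Bn^{(d-1)/2}$ in $t$, as you wrote it, you only get $O(n^{-(d-1)/2})$.
\end{itemize}

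The step that fails is turning $\Gamma_j(S_n)=n\kappa_j+O(1)$ into $n$-independent $P_j$ via Euler--Maclaurin. The $O(1)$ remainders are visible at this precision: $\mathbb E[S_n]-bn$ enters at order $n^{-1/2}$ and $\operatorname{Var}(S_n)-na^2$ at order $n^{-1}$. For $d\ge 2$ you would therefore need asymptotic expansions of these remainders. Euler--Maclaurin requires smoothness in $u$, which is not assumed, and with only Lipschitz dependence no fixed polynomials can work.

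Here is a counterexample. Take $E=[0,1]$, $Q_u(x,\cdot)\equiv\pi=$ Lebesgue measure, and $g(u,x)=f(u)+x$. The rows are i.i.d., (1) holds with $a_n=1$, and (v) and (2) hold. We have $S_n-bn=D_n+W_n$, where $W_n$ is a centred sum of uniform variables and
\[
D_n=\sum_{k=1}^n f(k/n)-n\int_0^1 f=\int_0^1 f'(v)\{nv\}\,dv .
\]
The uniform law is symmetric, so the classical expansion for $W_n$ gives, uniformly in $t$,
\[
\mathbb P(S_n-bn\le at\sqrt n)=\Phi(t)-\frac{D_n}{a\sqrt n}\varphi(t)+O(n^{-1}).
\]
Now take $f'=\sum_{m\ge1}m^{-2}w(2^m\cdot)$ with $w(x)=\{x\}-\tfrac12$. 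This $f$ is Lipschitz, yet
\[
D_{2^M}-\lim_n D_n=\tfrac1{12}\sum_m m^{-2}2^{-|m-M|}\ge \frac{1}{12M^2},
\]
which is not $O(n^{-1/2})$. So no fixed $P_1,P_2$ give $o(n^{-1})$.

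The repair is to build $\hat{\mathcal E}_{d,n}$ from the exact cumulants $\Gamma_j(S_n)$, that is, from $\mathbb E[S_n]-bn$, $\operatorname{Var}(S_n)-na^2$, and so on. This gives polynomials $P_{j,n}$ whose coefficients are bounded in $n$. With that choice your small-frequency estimate is the standard one, and the rest of your argument goes through. This $n$-dependent reading is also the only one under which the paper's citation-based proof can hold; it does not address this point either.
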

\begin{remark}
 Suppose $\inf a_n>0$. Let $Y_u$ be an $E$-valued random variable distributed according to $\pi_u$. Then the conditions of Theorem \ref{Edge1} hold when the random variables $H_u=g(u,Y_u)$
 are Diophantine (see \cite{Bre}) with an appropriate order and uniform constants, that is, when  $\sup_u|\mathbb E[e^{itH_u}]|\leq 1-C/|t|^\ell, |t|>0$ for some $C>0$ and an appropriate $\ell=\ell_d$. In particular, they hold for all $d$ if the uniform Cramer condition 
 $$
 \limsup_{|t|\to\infty}\sup_{u\in[0,1]}\left|\mathbb E[e^{itH_u}]\right|<1
 $$ 
 holds.
\end{remark}
\begin{remark}
By Lemma \ref{Lipschitz invariant measure lemma}, the map $u\to\pi_u$ is Lipschitz continuous in the total variation. However, in general the Lipschitz constant of the function $u\to \alpha_u(t)$ is of order $|t|$, and so for large values of $|t|$ it is not possible to replace the sum   $\sum_{k=1}^n\alpha_{k/n}(t)$ with $n\int \alpha_u(t)du$ (since the error term after taking the exponents is of order $e^{c|t|}$ for some $c>0$).
\end{remark}

\begin{theorem}[High-order Edgeworth expansions II]\label{Edge2}
Let  Assumptions  \ref{MainAssm} (i)-(v) be in force. There are polynomials $P_{j}$ with the following properties. If
\vskip0.1cm
(1) the state space  $E$ of the chains  is a  compact  Riemannian  manifold, and there are positive functions $p_u: E\times E\to\mathbb R, u\in[0,1]$, uniformly bounded and bounded away from $0$, such that
 for all $u\in[0,1]$, $x\in E$ and $\Gamma\in\mathcal B(E)$,
$$
Q_u(x,\Gamma)=\int_{\Gamma}p_u(x,y)dy,
$$
where $dy$ denotes the normalized volume measure.
\vskip0.1cm
(2) $u\to p_u$ is  Lipschitz continuous with respect to the supremum norm.  
\vskip0.1cm
(3) the functions  $g_u(x)=g(u,x)$ are uniformly H\"older continuous with some exponent $\alpha\in(0,1]$. 
\vskip0.1cm
Then  for every positive integer $d<\frac{1+\alpha}{1-\alpha}$, we have  
$$
\sup_{t\in\mathbb R}\left|\mathbb P(S_n-bn\leq at\sqrt n)-\Phi(t)-\sum_{j=1}^{d}n^{-j/2}P_{j}(t)\varphi(t)\right|=o(n^{-d/2}),
$$
where for $\alpha=1$ we set $\frac{1+\alpha}{1-\alpha}=\infty$.
\end{theorem}
\begin{remark}
The polynomials in Theorems \ref{Edge0}, \ref{Edge1} and \ref{Edge2} coincide. We can actually provide a closed formula for the coefficients of the polynomials $P_j$. In order not to overload the introduction, we will only briefly discuss it. First,
similarly to Theorem \ref{Edge0}, the polynomial $P_j$ in Theorem \ref{Edge1} and \ref{Edge2} has degree $j+2$ and its coefficients can be expressed by an algebraic combination of the asymptotic cumulants (and so moments) of $S_n$ up to order $j+2$, which exist by Theorem \ref{HighThm}. Theorem \ref{HighThm} also shows that the latter $k$-th asymptotic cumulant equals 
$$
\int_0^1\gamma_{k}(u)du,
$$
where $\gamma_k(u)$ is the $k$-th asymptotic cumulants of the sum 
$S_{n,u}=\sum_{j=1}^n g(u,Y_{j,u})$
and $(Y_{j,u})_{j=0}^\infty$ is the stationary Markov chain with transition operator $Q_u$.
\end{remark}
Our next result is a local large deviations principle (LDP).
\begin{theorem}[Local LDP]\label{LDP}
Let  Assumptions \ref{MainAssm} (i)-(iv)  be in force and suppose $a>0$.
There exists $\epsilon_0>0$ and a positive convex function $c:(0,\epsilon_0)\to\mathbb R$ such that for all $0<\epsilon<\epsilon_0$,
$$
\lim_{n\to\infty}\frac1n\ln\mathbb P(S_n-bn\geq \epsilon n)=-c(\epsilon).
$$
\end{theorem}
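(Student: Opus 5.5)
We will follow the Gärtner--Ellis route, in the local form of \cite{DH1}. Everything reduces to showing that the normalized log-moment generating function
$$
\Lambda_n(s)=\frac1n\ln\mathbb E\big[e^{s(S_n-bn)}\big]
$$
converges, for real $s$ in a small interval $(-\delta,\delta)$, to an explicit strictly convex analytic limit $\Lambda(s)$ with $\Lambda(0)=\Lambda'(0)=0$ and $\Lambda''(0)=a^2>0$.

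\textbf{Step 1: the limit $\Lambda$.} We use the parametric Perron--Frobenius theorem (Theorem \ref{RPF theorem single operator}) for the twisted operators $Q_{u,z}$ with $|z|$ small and uniformly in $u\in[0,1]$. It provides simple leading eigenvalues $\lambda_u(z)$, analytic in $z$ and Lipschitz in $u$, together with eigenvectors $h_{u,z}$, eigenfunctionals $\nu_{u,z}$, and a uniform spectral gap. By the lemma expressing $\mathbb E_\mu[e^{zS_n}f(X_{n,n})]$ as $\langle\mu,Q_{1/n,z}\cdots Q_{n/n,z}f\rangle$, the moment generating function is a product of $n$ slowly varying operators. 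As in the proof of Theorem \ref{Var}, we group the product into blocks of length $\ell$ with $1\ll\ell\ll n$. Inside a block we replace each operator by the frozen operator $Q_{u,z}$; this costs $O(\ell^2/n)$ per block in relative terms, by Assumption \ref{MainAssm}(i),(iv). The spectral gap then yields
$$
\Lambda_n(s)=\frac1n\sum_{k=1}^n\ln\lambda_{k/n}(s)-sb+o(1).
$$
Since $u\mapsto\ln\lambda_u(s)$ is Lipschitz, the Riemann sum converges and
$$
\Lambda(s)=\int_0^1\ln\lambda_u(s)\,du-sb .
$$
We expect this step to give even an $O(1/n)$ error, but $o(1)$ suffices. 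Expanding $\ln\lambda_u(s)=sb_u+\tfrac12 s^2a_u^2+O(s^3)$ uniformly in $u$ (the cumulant structure behind Theorem \ref{HighThm}) gives $\Lambda'(0)=0$ and $\Lambda''(0)=a^2>0$. Shrinking $\delta$, $\Lambda$ is then strictly convex on $(-\delta,\delta)$.

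\textbf{Step 2: local Gärtner--Ellis.} We set $\epsilon_0=\Lambda'(\delta/2)>0$ and
$$
c(\epsilon)=\sup_{0<s<\delta}\big(s\epsilon-\Lambda(s)\big)=s_\epsilon\epsilon-\Lambda(s_\epsilon),\qquad \Lambda'(s_\epsilon)=\epsilon .
$$
This $c$ is the Legendre transform restricted to the range of $\Lambda'$. It is positive, and it is convex (indeed strictly convex and analytic) by the inverse function theorem applied to $\Lambda'$.

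The upper bound $\limsup\frac1n\ln\mathbb P(S_n-bn\ge\epsilon n)\le -c(\epsilon)$ is Chebyshev's inequality with $s=s_\epsilon$.

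For the lower bound we use the change of measure $d\tilde{\mathbb P}_n=e^{s(S_n-bn)-n\Lambda_n(s)}d\mathbb P$ with $s=s_{\epsilon'}$ for $\epsilon'$ slightly larger than $\epsilon$. Under $\tilde{\mathbb P}_n$ the log-moment generating function of $S_n-bn$ near $0$ is $\Lambda_n(s+\cdot)-\Lambda_n(s)$. This converges to $\Lambda(s+\cdot)-\Lambda(s)$, whose derivative at $0$ is $\epsilon'$. The upper bound argument applied under $\tilde{\mathbb P}_n$ on both sides then shows that $(S_n-bn)/n$ concentrates near $\epsilon'$, exponentially fast. Hence
$$
\mathbb P\big(S_n-bn\in[\epsilon n,(\epsilon'+\eta)n]\big)\ge \tfrac12\, e^{\,n\Lambda_n(s)-s(\epsilon'+\eta)n}.
$$
Letting $\eta\to0$ and then $\epsilon'\downarrow\epsilon$, and using continuity of $c$, gives the matching lower bound.

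\textbf{Main obstacle.} The delicate point is Step 1 at complex-free but nonzero real $s$. We must control the product of non-commuting operators $Q_{k/n,s}$, uniformly in $n$, with only $o(n)$ error in the logarithm. The normalized products must stay close to the rank-one projections $h_{u,s}\otimes\nu_{u,s}$ across block boundaries, where the parameter $u$ changes. Our plan is to use the Lipschitz dependence of $h_{u,s}$ and $\nu_{u,s}$ on $u$ from Theorem \ref{RPF theorem single operator}, so that consecutive blocks are glued together with a factor $1+O(\ell/n)$. This yields a total multiplicative error of $e^{O(n/\ell\cdot\ell/n)+O(n\ell/n)}$, and choosing $\ell$ fixed but large, or $\ell\approx\sqrt n$, keeps it subexponential.
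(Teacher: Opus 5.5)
Your proposal is correct and is essentially the paper's argument. The paper likewise obtains $\lim_{n}\frac1n\ln\mathbb E[e^{tS_n}]=\int_0^1\ln\lambda(u,t)\,du$ for small real $t$ from the parametric Perron--Frobenius theorem, though through the gluing formula \eqref{Recall} with factors $\langle\varphi(k/n,t),\upsilon((k+1)/n,t)\rangle$ rather than frozen blocks. It then simply cites the local G\"artner--Ellis lemma \cite[Lemma XIII.2]{HH}, which your Chebyshev-plus-exponential-tilting argument proves by hand; your choice $\epsilon_0=\Lambda'(\delta/2)$, which keeps the maximizer $s_\epsilon$ interior, is slightly more careful than the paper's single $\epsilon_0$.

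The only slip is in the block bookkeeping of Step 1. Each frozen block also carries a relative spectral-gap remainder $O(\rho^\ell)$, and the freezing error $O(\ell^2/n)$ must be small. So you need either $1\ll\ell\ll\sqrt n$, or a fixed $\ell$ followed by $\ell\to\infty$ after $n\to\infty$; neither $\ell\approx\sqrt n$ nor a fixed $\ell$ alone gives a subexponential error as you claimed.
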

We are also able to prove a global moderate deviations principle (MDP) with optimal scale.
\begin{theorem}[MDP]\label{MDP}
Let  Assumptions \ref{MainAssm} (i)-(iv) be in force and suppose $a>0$.
For every sequence $c_n$ such that $\lim_{n\to\infty}\frac{c_n}{\sqrt n}=\infty$ and $\lim_{n\to\infty}\frac{c_n}{n}=0$, for all open sets $\Gamma\subset\mathbb R$ we have 
$$
-\inf_{x\in\Gamma}\left(\frac{x^2}{2a^2}\right)\leq \liminf_{n\to\infty}\frac{1}{c_n^2/n}\ln\mathbb P((S_n-bn)/c_n\in\Gamma),
$$
and for all closed sets $\Gamma\subset\mathbb R$ we have 
$$
\limsup_{n\to\infty}\frac{1}{c_n^2/n}\ln\mathbb P((S_n-bn)/c_n\in\Gamma)\leq -\inf_{x\in\Gamma}\left(\frac{x^2}{2a^2}\right).
$$
\end{theorem}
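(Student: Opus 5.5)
We prove the moderate deviations principle via the Gärtner--Ellis theorem at scale $c_n^2/n$. Set $\lambda_n=c_n^2/n\to\infty$ and $W_n=(S_n-bn)/c_n$. It suffices to show that for every real $t$,
$$
\Lambda(t):=\lim_{n\to\infty}\frac{1}{\lambda_n}\ln\mathbb E\big[e^{\lambda_n tW_n}\big]=\frac{a^2t^2}{2}.
$$
The limit is finite, differentiable and strictly convex because $a>0$. Its Legendre transform is then $x^2/(2a^2)$, and Gärtner--Ellis yields both the lower bound for open sets and the upper bound for closed sets.

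\textbf{Reduction to twisted operators.} We have $\lambda_n tW_n=z_nS_n-z_nbn$ with $z_n=tc_n/n\to0$, since $c_n=o(n)$. By the lemma expressing $\mathbb E_\mu[e^{zS_n}f(X_{n,n})]$ through twisted operators,
$$
\mathbb E\big[e^{z_nS_n}\big]=\langle\pi_0,Q_{1/n,z_n}\cdots Q_{n/n,z_n}\mathbf 1\rangle .
$$
So we need asymptotics of this product for real parameters $z$ in a shrinking neighbourhood of $0$.

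\textbf{Eigenvalue expansion.} The plan is to use the parametric Perron--Frobenius theorem (Theorem \ref{RPF theorem single operator}), valid for complex $z$ with $|z|\le\delta$ uniformly in $u$. It provides, for the sequential product, a decomposition of the form
$$
Q_{1/n,z}\cdots Q_{n/n,z}\mathbf 1=\Big(\prod_{k=1}^n\lambda_{k/n}(z)\Big)\big(h+O(\cdot)\big),
$$
with $\ln\mathbb E[e^{zS_n}]=\sum_k\ln\lambda_{k/n}(z)+O(1)$, uniformly in $|z|\le\delta$ and $n$. This is the same input used for Theorem \ref{Var} and Theorem \ref{BE}. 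The local eigenvalue $\lambda_u(z)$ of $Q_{u,z}$ is analytic in $z$ with uniformly bounded derivatives in $u$. Its logarithm has Taylor expansion
$$
\ln\lambda_u(z)=zb_u+\tfrac{z^2}{2}a_u^2+O(|z|^3),
$$
with the $O$ uniform in $u$. Summing and using the Lipschitz regularity in $u$ (so that Riemann sums approximate integrals with error $O(1)$), we get
$$
\ln\mathbb E[e^{zS_n}]=znb+\tfrac{z^2}{2}na^2+O(n|z|^3)+O(1)+O(|z|).
$$
Alternatively, this is the cumulant statement of Theorem \ref{HighThm}, with cumulant bounds uniform on a disc, applied via analyticity.

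Plugging in $z=z_n$ gives
$$
\ln\mathbb E\big[e^{\lambda_ntW_n}\big]=\tfrac{t^2c_n^2}{2n}a^2+O\big(c_n^3/n^2\big)+O(1).
$$
Dividing by $\lambda_n=c_n^2/n$, the error becomes $O(c_n/n)+O(n/c_n^2)\to0$, which proves the claim.

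\textbf{Main obstacle.} The main difficulty is uniformity. The $O(1)$ remainder in the log-moment generating function must be uniform over $|z|\le\delta$ and all $n$, including the effect of the initial distribution $\pi_0$ and the boundary terms from the sequential products. The constants in the $O(|z|^3)$ term must also be uniform in $u$. I would obtain both from the complex projective metric contraction underlying Theorem \ref{RPF theorem single operator}. This requires that the sequential normalized products stay in a cone with bounded aperture uniformly for small real $z$, which holds because $g$ is bounded and Assumption \ref{MainAssm}(ii) gives uniform contraction.
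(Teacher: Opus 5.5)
Your proof is correct and follows essentially the same route as the paper: Gärtner--Ellis at speed $c_n^2/n$, with $\Lambda_n(tc_n/n)$ computed from the product formula \eqref{Recall} through a second-order Taylor expansion with an $O(n|z|^3)$ remainder. The only difference is that you expand each $\ln\lambda(u,z)$ and pass to Riemann sums (using only the $O(1)$ errors from \eqref{Recall} and Lipschitz continuity of $u\mapsto b_u$, which vanish after dividing by $c_n^2/n$ since $c_n/\sqrt n\to\infty$), whereas the paper expands $\Lambda_n$ itself, using $\Lambda_n'(0),\Lambda_n''(0)$ from Theorem \ref{Var} and a Cauchy-estimate bound $\Lambda_n'''=O(n)$ near $0$.
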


\section{A parametric Perron--Frobenius Theorem}
Before proving a Perron--Frobenius theorem, we develop some lemmas that ensure a certain structure under perturbations, under our assumptions. 

\subsection{Perturbation Lemmas}

Truquet \cite{truquet2019local} proved that Assumptions 1 and 2 imply geometric $\phi$-mixing (Proposition 1, \cite{truquet2019local}). For $1\leq i \leq j \leq n$, we set $$\mathcal{F}_{i,j}^{(n)} = \sigma(X_{n, \ell}  : i\leq \ell \leq j),$$
 and for $0\leq j \leq n-1$, $$\phi_n(j) = \max_{1\leq i \leq n-j} \sup \left\{|\mathbb P(B \, | \, A) - \mathbb P(B)|: B\in \mathcal F_{i+j, n}^{(n)}, A\in \mathcal{F}_{1,i}^{(n)}, \mathbb P(A) \neq 0  \right\}.$$ Finally, we set $\phi(j) = \sup_{n>j} \phi_n(j)$.

\begin{lemma}
    Make Assumptions  \ref{MainAssm} (i)-(ii). Then, there exists $C>0$ and $\rho\in (0,1)$, only depending on $m$, $L$, and $r$ such that $$\phi(j)\le C\rho^j.$$
\end{lemma}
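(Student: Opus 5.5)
The plan is to reduce the $\phi$-mixing coefficient to a uniform contraction estimate for products of the inhomogeneous kernels $Q_{k/n}$, via the Dobrushin ergodic coefficient. For a Markov kernel $P$, set
$$\delta(P)=\tfrac12\sup_{x,y\in E}\|\delta_xP-\delta_yP\|_{\operatorname{TV}}.$$
This is submultiplicative: $\delta(PR)\le\delta(P)\delta(R)$. For probability measures it satisfies $\|\mu P-\nu P\|_{\operatorname{TV}}\le \delta(P)\|\mu-\nu\|_{\operatorname{TV}}$. By Assumption \ref{MainAssm} (ii), $\delta(Q_u^m)\le r/2<1$ uniformly in $u$.

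\textbf{Step 1: blocks of length $m$ contract uniformly.} Fix $n$ and $i$, and consider a block $Q_{(i+1)/n}\cdots Q_{(i+m)/n}$. We compare it with $Q_{u}^m$ for $u=(i+1)/n$. Telescoping and Assumption \ref{MainAssm} (i) give
$$\bigl\|Q_{(i+1)/n}\cdots Q_{(i+m)/n}-Q_u^m\bigr\|_\infty\le \sum_{l=1}^{m}\|Q_{(i+l)/n}-Q_u\|_\infty\le Lm^2/n,$$
since Markov operators have norm $1$. Hence the block has Dobrushin coefficient at most $r/2+Lm^2/n$.

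\textbf{Step 2: choose the threshold.} Let $n_0$ be such that $Lm^2/n_0\le (1-r/2)/2$. Then for $n\ge n_0$ every block has coefficient at most $\theta:=(1+r/2)/2<1$, with $\theta$ depending only on $r$. By submultiplicativity,
$$\delta\bigl(Q_{(i+1)/n}\cdots Q_{(i+j)/n}\bigr)\le \theta^{\lfloor j/m\rfloor},$$
and leftover factors have coefficient at most $1$. For $n<n_0$ we have $j<n_0$, which is bounded in terms of $m,L,r$ only, so $\phi(j)\le 1$ can be absorbed into the constant $C$. One can also avoid the case split by comparing blocks with a frozen $u$ only when $n$ is large; either way $C$ and $\rho$ depend only on $m,L,r$.

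\textbf{Step 3: mixing from contraction.} Take $A\in\mathcal F^{(n)}_{1,i}$ with $\mathbb P(A)>0$ and $B\in\mathcal F^{(n)}_{i+j,n}$. By the Markov property, $\mathbb P(B\mid X_{n,i+j}=y)=h(y)$ for a measurable $h$ with $0\le h\le1$. Then
$$\mathbb P(B\mid A)=\int h\,d(\mu_A P),\qquad \mathbb P(B)=\int h\,d(\mu P),$$
where $\mu_A$ is the law of $X_{n,i}$ conditional on $A$, $\mu$ is the law of $X_{n,i}$, and $P=Q_{(i+1)/n}\cdots Q_{(i+j)/n}$. Here we use that conditional on $A$ the future depends only on $X_{n,i}$, which holds because $A$ lies in the past $\sigma$-algebra. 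Therefore
$$|\mathbb P(B\mid A)-\mathbb P(B)|\le \tfrac12\|\mu_AP-\mu P\|_{\operatorname{TV}}\le \delta(P)\le \theta^{\lfloor j/m\rfloor}.$$
This gives $\phi(j)\le C\rho^j$ with $\rho=\theta^{1/m}$ and $C=\max\bigl(\theta^{-1},\rho^{-n_0}\bigr)$.

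The main obstacle is Step 1–2: making the contraction uniform in $n$ and in the position $i$ despite the time-inhomogeneity. The Lipschitz assumption handles this, but only for $n$ large relative to $Lm^2$, which is why the small-$n$ regime has to be absorbed into the constant. The rest is standard Dobrushin-coefficient bookkeeping.
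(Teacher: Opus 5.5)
Your proof is correct. The paper gives no argument for this lemma; it only attributes it to Proposition 1 of \cite{truquet2019local}. So the comparison is between a citation and your self-contained derivation.

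Your route is the standard Dobrushin-coefficient argument:
\begin{itemize}
\item You freeze the parameter on blocks of length $m$ and use the telescoping bound $\|Q_{(i+1)/n}\cdots Q_{(i+m)/n}-Q_{(i+1)/n}^m\|_\infty\le Lm^2/n$. This is the same device the paper uses later in its lemma on products of $M$ consecutive operators.
\item You pass to the contraction coefficient via $\delta(P)\le\delta(R)+\|P-R\|_\infty$. You use this implicitly; it is a one-line triangle inequality.
\item You reduce $|\mathbb P(B\mid A)-\mathbb P(B)|$, through the Markov property, to $\delta$ of the intermediate kernel $Q_{(i+1)/n}\cdots Q_{(i+j)/n}$.
\end{itemize}

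What your version buys over the bare citation is that it actually checks the clause that $C$ and $\rho$ depend only on $m,L,r$. You get explicitly $\theta=(2+r)/4$, $\rho=\theta^{1/m}$, $n_0=\lceil 4Lm^2/(2-r)\rceil$ and $C=\max(\theta^{-1},\rho^{-n_0})$. You also handle two details correctly:
\begin{itemize}
\item the factor-$2$ normalization of the paper's total variation norm, which gives $\delta(Q_u^m)\le r/2$;
\item the small-$n$ regime $j<n<n_0$, where only the trivial bound $\phi_n(j)\le 1$ is available and must be absorbed into $C$.
\end{itemize}
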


The following lemma demonstrates the existence of a uniform spectral gap.

\begin{lemma} \label{Spectral Decomposition Lemma}
For every $u\in[0,1]$, we have the following uniform spectral-gap decomposition.  $$Q_u=\Pi_u+N_u, \qquad N_u:=Q_u-\Pi_u,$$ 
   where 
   $$\Pi_u^2=\Pi_u,\qquad\Pi_uN_u=N_u\Pi_u=0,$$ and there exist constants $C > 0$ and $\rho\in (0,1)$ such that for each $n\in \mathbb Z^+$, $$\sup_{u\in [0,1]}\|N_u^n\|_\infty=\|Q_u^n-\Pi_u\|_\infty \leq C\rho^n.$$
 Furthermore, $$\sup_{u\in[0,1]}r(N_u)<1.$$
That is, $\{Q_u:u\in[0,1]\}$ has a uniform spectral gap on $\mathfrak B$.
\end{lemma}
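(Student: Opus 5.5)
We write $\mathbf 1$ for the constant function and use Assumption \ref{MainAssm} (ii) as a uniform Doeblin-type contraction. By Theorem \ref{Tru}, each $Q_u$ has a unique invariant probability $\pi_u$, so $\Pi_u=\mathbf 1\otimes\pi_u$ is well defined.

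The algebraic identities are immediate. Since $\pi_u(\mathbf 1)=1$, we have $\Pi_u^2 f=\pi_u(f)\pi_u(\mathbf 1)\mathbf 1=\Pi_u f$. Since $Q_u\mathbf 1=\mathbf 1$, we get $Q_u\Pi_u=\Pi_u$, and invariance $\pi_uQ_u=\pi_u$ gives $\Pi_uQ_u=\Pi_u$. Hence $\Pi_uN_u=\Pi_uQ_u-\Pi_u^2=0$, and likewise $N_u\Pi_u=0$. Expanding $(\Pi_u+N_u)^n$, all cross terms vanish, so $Q_u^n=\Pi_u+N_u^n$ for $n\ge1$. Therefore $N_u^n=Q_u^n-\Pi_u$, which gives the stated equality of norms.

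The main step is the uniform geometric bound, obtained by a Dobrushin coefficient argument. For a probability kernel $P$ define
$$\delta(P)=\tfrac12\sup_{x,y}\|\delta_xP-\delta_yP\|_{\operatorname{TV}}.$$
This coefficient is submultiplicative, and for any signed measure $\mu$ with $\mu(E)=0$ one has $\|\mu P\|_{\operatorname{TV}}\le\delta(P)\|\mu\|_{\operatorname{TV}}$; this is standard and is proved by writing $\mu=c(\nu_1-\nu_2)$ with $\nu_1,\nu_2$ probabilities. With the paper's TV normalization, Assumption \ref{MainAssm} (ii) gives $\delta(Q_u^m)\le r/2<1$. (Even if $r$ were only assumed $<2$, the argument is the same.) For $f\in\mathfrak B$ with $\|f\|_\infty\le1$ and any $x$, invariance gives
$$|(Q_u^nf)(x)-\pi_u(f)|=\left|\int\bigl(\delta_xQ_u^n-\pi_uQ_u^n\bigr)(dy)f(y)\right|\le\|(\delta_x-\pi_u)Q_u^n\|_{\operatorname{TV}}.$$
Write $n=km+s$ with $0\le s<m$. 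Since $(\delta_x-\pi_u)Q_u^s$ has zero mass and TV norm at most $2$, applying the contraction $k$ times yields the bound $2(r/2)^k$. Hence
$$\|Q_u^n-\Pi_u\|_\infty\le 2(r/2)^{\lfloor n/m\rfloor}\le C\rho^n,\qquad \rho=(r/2)^{1/m},\quad C=2(r/2)^{-1}.$$
These constants depend only on $m$ and $r$, so the bound is uniform in $u$. Complex-valued $f$ are handled by splitting into real and imaginary parts, which at most doubles $C$. Checking this contraction step carefully, including the complex case, is the only part requiring care.

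Finally, Gelfand's formula gives
$$r(N_u)=\lim_n\|N_u^n\|_\infty^{1/n}\le\lim_n (C\rho^n)^{1/n}=\rho$$
for every $u$. Hence $\sup_u r(N_u)\le\rho<1$, which is the uniform spectral gap.
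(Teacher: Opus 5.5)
Your proof is correct and follows essentially the same route as the paper. Both extract a uniform Doeblin/Dobrushin contraction from Assumption \ref{MainAssm} (ii) and iterate it over blocks $n=km+s$, with constants independent of $u$.

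The differences are cosmetic. The paper bounds $\|N_u^m\|_\infty\le r$ directly, by averaging $\delta_yQ_u^m$ against $\pi_u$, and then uses submultiplicativity of $N_u^n=(N_u^m)^qN_u^\ell$. You instead contract zero-mass measures, which gives the slightly better factor $r/2$. You also spell out the Gelfand-formula step giving $\sup_u r(N_u)\le\rho$, which the paper leaves implicit. Splitting into real and imaginary parts is unnecessary, since $|\int f\,d\sigma|\le\|f\|_\infty\|\sigma\|_{\operatorname{TV}}$ already holds for complex $f$.
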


\begin{proof}
Fix  $u\in [0,1]$. By Assumption \ref{MainAssm} (ii), we have
$$\left\|\delta_xQ_u^m-\pi_u\right\|_{\text{TV}}=
\left\|\delta_xQ_u^m-\int_E\delta_yQ_u^m\pi_u(dy)\right\|_{\text{TV}}
$$
$$
\leq
\int_E\left\|\delta_xQ_u^m-\delta_yQ_u^m\right\|_{\text{TV}} \pi_u(dy)
\leq r.$$
Since $\Pi_uQ_u=Q_u\Pi_u=\Pi_u$, we have
$$N_u^m=(Q_u-\Pi_u)^m=Q_u^m-\Pi_u.$$ 
Thus, for $f\in\mathfrak B$, $$|(Q_u^m - \Pi_u)f(x)| = \left|\int f d(\delta_x Q_u^m - \pi_u)\right|\leq r\|f\|_\infty.$$
Thus, we have
$$\|N_u^m\|_\infty\leq r<1.$$
Because the estimate is uniform in $u$, and by decomposing each $n\in \mathbb Z^+$ as $n = mq+\ell$, it follows that there exist $C>0$ and
$\rho=r^{1/m}\in(0,1)$ which are independent of $n$ such that
$$
\sup_{u\in[0,1]}\|N_u^n\|_\infty
\leq C\rho^n.
$$
\end{proof}

Under the assumption that the function $g$ is bounded, for $z\in\mathbb C$,
denote 
$$
Q_{u,z}h(x)=\int e^{zg(u,y)}h(y)Q_u(x,dy).
$$

\begin{lemma}\label{Derivatives of twisted transfer operator lemma}
    Under Assumptions \ref{MainAssm} (i)-(iv),
    for $j\in \mathbb Z^+$ and $z\in\mathbb C$,
    $$\sup_{u\in[0,1]}\left\|\frac{\partial^j}{\partial z^j}Q_{u,z}\right\|_\infty \le K^je^{K|\Re(z)|},$$ 
    where $\Re (z)$ denotes the real part of $z$ and $K=\sup_{u\in[0,1],x\in E}|g(u,x)|$.
\end{lemma}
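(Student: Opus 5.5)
The idea is to differentiate under the integral sign. By definition,
$$
Q_{u,z}h(x)=\int_E e^{zg(u,y)}h(y)\,Q_u(x,dy),
$$
and for each fixed $u,x,y$ the integrand is an entire function of $z$. We first show that $z\mapsto Q_{u,z}$ is holomorphic as an $\mathcal L_{\mathfrak B}$-valued map, with
$$
\frac{\partial^j}{\partial z^j}Q_{u,z}h(x)=\int_E g(u,y)^j e^{zg(u,y)}h(y)\,Q_u(x,dy).
$$
This follows from the power series expansion
$$
e^{zg(u,y)}=\sum_{k\ge0}\frac{z^k g(u,y)^k}{k!}.
$$
By Assumption \ref{MainAssm} (iii), $|g(u,y)|\le K$, so the series converges absolutely and uniformly in $y$. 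The operators
$$
A_k h(x):=\int_E g(u,y)^k h(y)\,Q_u(x,dy)
$$
satisfy $\|A_k\|_\infty\le K^k$, because $Q_u(x,\cdot)$ is a probability measure. Hence $Q_{u,z}=\sum_k \frac{z^k}{k!}A_k$ converges in operator norm for every $z\in\mathbb C$. Term-by-term differentiation of this norm-convergent power series then gives the formula above.

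For the bound, fix $h\in\mathfrak B$ with $\|h\|_\infty\le1$ and estimate pointwise:
$$
\left|\int_E g(u,y)^j e^{zg(u,y)}h(y)\,Q_u(x,dy)\right|\le \int_E K^j e^{\Re(z)g(u,y)}\,Q_u(x,dy)\le K^j e^{K|\Re(z)|}.
$$
Here we use $|e^{zg}|=e^{\Re(z)g}$ and $\Re(z)g(u,y)\le K|\Re(z)|$. Taking the supremum over $x$, over $h$, and over $u\in[0,1]$ gives the claim. Measurability of $x\mapsto$ the integral follows because $Q_u$ is a transition kernel and the integrand is bounded and measurable, so each derivative lies in $\mathcal L_{\mathfrak B}$.

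The only step needing care is justifying operator-norm differentiability rather than mere pointwise differentiability. The uniform-in-$y$ convergence of the exponential series, bounded by $e^{K|z|}$, handles this directly, so I expect no real obstacle. Assumptions (i), (ii) and (iv) are not needed here; only the boundedness (iii) is used.
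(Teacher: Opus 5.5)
Your proof is correct and takes the same route as the paper: differentiate under the integral sign to get $\partial_z^j Q_{u,z}h(x)=\int_E g(u,y)^j e^{zg(u,y)}h(y)\,Q_u(x,dy)$, then bound the integrand by $K^j e^{K|\Re(z)|}$ using that $Q_u(x,\cdot)$ is a probability measure. Your power-series argument, which justifies differentiability in operator norm rather than only pointwise, is a rigor step the paper leaves implicit, and you are right that only Assumption (iii) is needed.
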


\begin{proof}
Let $f\in\mathfrak B$, $x\in E$ and $j\in \mathbb Z^+$. By differentiating under the integral of 
$$
Q_{u,z}f(x) = \int e^{z g(u,y)} f(y) Q_u(x, dy),
$$
we obtain that
$$
\frac{\partial^j}{\partial z^j}Q_{u,z}f(x) = \int (g(u,y))^j e^{zg(u,y)} f(y) Q_{u}(x,dy).
$$ 
Set $K=\sup|g|$.
Taking the supremum over $x\in E$ gives 
$$
\left\|\frac{\partial^j}{\partial z^j}Q_{u,z} f\right\|_{\infty}\leq K^j e^{|\Re(z)|K}\|f\|_\infty.
$$
The proof is completed by taking the supremum over $f$ with $\|f\|_\infty\leq 1$.
\end{proof}



\begin{lemma}\label{Twisted operator lemma}
Under  Assumptions \ref{MainAssm} (i)-(iv), for every $z\in\mathbb C$, 
$$\sup_{u\in[0,1]}\|Q_{u,z}-Q_u\|_\infty\leq K|z| e^{K|\Re(z)|},$$
\end{lemma}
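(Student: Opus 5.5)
The plan is to compare the two operators pointwise, writing their difference as a single integral against the kernel $Q_u(x,\cdot)$. Fix $u\in[0,1]$, $z\in\mathbb C$, $f\in\mathfrak B$ with $\|f\|_\infty\le 1$, and $x\in E$. Then
$$
(Q_{u,z}-Q_u)f(x)=\int_E\left(e^{zg(u,y)}-1\right)f(y)\,Q_u(x,dy),
$$
so the whole task reduces to a uniform scalar bound on $|e^{zw}-1|$ for real $w$ with $|w|\le K$.

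For this scalar bound I would use the fundamental theorem of calculus along the segment from $0$ to $z$:
$$
e^{zw}-1=\int_0^1 zw\,e^{szw}\,ds .
$$
Since $|e^{szw}|=e^{s\Re(z)w}\le e^{K|\Re(z)|}$ for $s\in[0,1]$, this gives $|e^{zw}-1|\le K|z|e^{K|\Re(z)|}$. Equivalently, one can write $Q_{u,z}-Q_u=\int_0^1 \frac{\partial}{\partial z}Q_{u,sz}\,z\,ds$ and invoke Lemma \ref{Derivatives of twisted transfer operator lemma} with $j=1$, using $|\Re(sz)|\le|\Re(z)|$.

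Inserting the scalar bound into the integral, and using that $Q_u(x,\cdot)$ is a probability measure and $|f|\le 1$, yields $|(Q_{u,z}-Q_u)f(x)|\le K|z|e^{K|\Re(z)|}$. Taking the supremum over $x$, then over $f$, and finally over $u$ gives the claim, since $K$ is independent of $u$ by Assumption \ref{MainAssm} (iii). There is no real obstacle here. The only point needing care is that the exponential factor must be controlled by $|\Re(z)|$ rather than $|z|$, which holds because $g$ is real-valued.
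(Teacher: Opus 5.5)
Your proof is correct and follows the paper's argument. You write the difference as $\int_E (e^{zg(u,y)}-1)f(y)\,Q_u(x,dy)$ and bound $\sup_y|e^{zg(u,y)}-1|\le K|z|e^{K|\Re(z)|}$. Your fundamental-theorem-of-calculus justification of this scalar bound, which uses that $g$ is real-valued, is a detail the paper leaves implicit.
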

where $K=\sup_{u\in[0,1],x\in E}|g(u,x)|$.
\begin{proof}
 For $f\in \mathfrak B$ and $x\in E$, we have
\begin{align*}
\left|(Q_{u,z}-Q_u)f(x)\right| &= \left| \int_E \left(e^{z g(u,y)}-1\right)f(y)\,Q_u(x,dy) \right|\\
&\leq \sup_{y\in E}\left|e^{zg(u,y)}-1\right| \|f\|_\infty\\
&\leq |z|\|g(u,\cdot)\|_\infty e^{|\Re(z)|\|g(u,\cdot)\|_\infty }\|f\|_\infty
\leq K|z| e^{K|\Re(z)|}\|f\|_\infty.
\end{align*}
Taking the supremum over $x\in E$ and $f$ with $\|f\|_\infty\leq 1$ completes the proof.
\end{proof}

\begin{lemma}\label{Lipschitz invariant measure lemma}
   Under  Assumptions \ref{MainAssm} (i)-(iv), for $u,v\in [0,1]$, we have $$\|\pi_u-\pi_v\|_{\operatorname{TV}}
\leq \frac{mL}{1-r}|u-v|, \text{ and} \quad \|\Pi_u-\Pi_v\|_\infty  \leq C|u-v|.$$
\end{lemma}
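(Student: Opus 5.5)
The plan is to compare the two invariant measures through their $m$-step kernels. Invariance gives $\pi_u=\pi_uQ_u^m$ and $\pi_v=\pi_vQ_v^m$. Subtracting, I will use the decomposition
$$
\pi_u-\pi_v=(\pi_u-\pi_v)Q_u^m+\pi_v\bigl(Q_u^m-Q_v^m\bigr).
$$
Each term will be estimated in total variation, and the $\pi_u-\pi_v$ contribution will then be absorbed into the left-hand side.

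\textbf{First term.} The signed measure $\mu=\pi_u-\pi_v$ has total mass zero. Assumption \ref{MainAssm} (ii) gives a Dobrushin contraction. Write $\mu=\frac12\|\mu\|_{\operatorname{TV}}(\mu_1-\mu_2)$ with $\mu_1,\mu_2$ probability measures; this is the Jordan decomposition, normalized using that the mass is zero. Then
$$
\|\mu_1Q_u^m-\mu_2Q_u^m\|_{\operatorname{TV}}\le\iint\|\delta_xQ_u^m-\delta_yQ_u^m\|_{\operatorname{TV}}\,\mu_1(dx)\mu_2(dy)\le r.
$$
The convention for $\|\cdot\|_{\operatorname{TV}}$ (the factor $2$ in the paper's definition) must be tracked consistently here. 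Assuming the constant in (ii) is stated in the same norm, this yields $\|(\pi_u-\pi_v)Q_u^m\|_{\operatorname{TV}}\le r\|\pi_u-\pi_v\|_{\operatorname{TV}}$. I expect this normalization bookkeeping to be the only delicate point: the claimed constant $\frac{mL}{1-r}$ must come out exactly.

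\textbf{Second term.} Telescope
$$
Q_u^m-Q_v^m=\sum_{j=0}^{m-1}Q_u^{j}(Q_u-Q_v)Q_v^{m-1-j}.
$$
Markov operators have $\|\cdot\|_\infty$ norm $1$, and Assumption \ref{MainAssm} (i) bounds the middle factor. Hence $\|Q_u^m-Q_v^m\|_\infty\le mL|u-v|$. Since $\pi_v$ is a probability measure,
$$
\|\pi_v(Q_u^m-Q_v^m)\|_{\operatorname{TV}}\le \sup_x\|\delta_x(Q_u^m-Q_v^m)\|_{\operatorname{TV}}\le mL|u-v|.
$$

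\textbf{Conclusion.} Combining the two estimates gives $\|\pi_u-\pi_v\|_{\operatorname{TV}}\le r\|\pi_u-\pi_v\|_{\operatorname{TV}}+mL|u-v|$. The quantity $\|\pi_u-\pi_v\|_{\operatorname{TV}}$ is finite, so rearranging yields $\|\pi_u-\pi_v\|_{\operatorname{TV}}\le \frac{mL}{1-r}|u-v|$.

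For the operator bound, note $(\Pi_u-\Pi_v)f=(\pi_u(f)-\pi_v(f))\mathbf 1$. Therefore
$$
\|\Pi_u-\Pi_v\|_\infty=\sup_{\|f\|_\infty\le1}|\pi_u(f)-\pi_v(f)|=\|\pi_u-\pi_v\|_{\operatorname{TV}},
$$
and the second inequality holds with $C=\frac{mL}{1-r}$.
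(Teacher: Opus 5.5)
Your proposal is correct and matches the paper's proof step for step. Both use the splitting $\pi_u-\pi_v=(\pi_u-\pi_v)Q_u^m+\pi_v(Q_u^m-Q_v^m)$, a Dobrushin contraction for the first term, the telescoping bound $mL|u-v|$ for the second, absorption, and the identity $\|\Pi_u-\Pi_v\|_\infty=\|\pi_u-\pi_v\|_{\operatorname{TV}}$; the paper cites the Dobrushin inequality where you derive it via the Jordan decomposition.

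The normalization you flag is harmless. With the paper's convention $\|\mu-\nu\|_{\operatorname{TV}}=2\sup_A|\mu(A)-\nu(A)|$, your computation actually gives contraction factor $r/2\le r$, so the constant $\frac{mL}{1-r}$ follows a fortiori.
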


\begin{proof}
Assumption \ref{MainAssm} (ii) gives $\sup_{u,x,y}\|\delta_x Q_u^m - \delta_yQ_u^m\|_{\text{TV}}\leq r.$ Let $P(x,dy)$ be a family of transition probabilities on $(E,\mathcal B(E))$.
Recall the Dobrushin contraction inequality $\| \mu P - \nu P\|_{\text{TV}} \leq \delta(P)\|\mu-\nu\|_{\text{TV}}$ for probability measures $\mu,\nu$, where  the Dobrushin contraction coefficient $\delta(P)$ is given by 
$$
\delta(P)= \sup_{x,y\in E}\|\delta_x P - \delta_y P\|_{\text{TV}}.
$$ 
Together, these imply that $$\|\mu Q_u^m - \nu Q_u^m\|_{\text{TV}} \leq r \|\mu-\nu\|_{\text{TV}}. $$ Since
$\pi_uQ_u^m=\pi_u$ and $\pi_vQ_v^m=\pi_v$, we obtain
\begin{align*}
\|\pi_u-\pi_v\|_{\text{TV}} &\leq \|\pi_uQ_u^m-\pi_vQ_u^m\|_{\text{TV}} + \|\pi_vQ_u^m-\pi_vQ_v^m\|_{\text{TV}}\\
&\leq r\|\pi_u-\pi_v\|_{\text{TV}} + \|\pi_v(Q_u^m-Q_v^m)\|_{\text{TV}}.
\end{align*} We now bound the second term. By the telescoping expansion
$$Q_u^m-Q_v^m = \sum_{\ell=0}^{m-1} Q_u^\ell(Q_u-Q_v)Q_v^{m-1-\ell},$$
we have
 $$\|\pi_v(Q_u^m-Q_v^m)\|_{\text{TV}}
\leq mL|u-v|.$$ Thus, $$\|\pi_u-\pi_v\|_{\text{TV}} \leq \frac{mL}{1-r}|u-v|.$$

Furthermore, for $f\in \mathfrak B$, 
$$(\Pi_u-\Pi_v)f  =  \big(\pi_u(f)-\pi_v(f)\big)\mathbf{1}.$$
Thus, by taking the supremum over $\|f\|_\infty < 1$,
$$\|\Pi_u-\Pi_v\|_\infty = \|\pi_u-\pi_v\|_{\text{TV}} \leq \frac{mL}{1-r}|u-v|.$$ Thus, the proof is complete.

\end{proof}



\subsection{A parametric Perron--Frobenius Theorem}

\begin{theorem}[Parametric Perron--Frobenius Theorem]\label{RPF theorem single operator}
Under  Assumptions \ref{MainAssm} (i)-(iv), the following Perron--Frobenius theorem holds. 

There exist $\delta,C>0$, and $r\in(0,1)$ such that, for $|z|< \delta$, and $u\in [0,1]$, $Q_{u,z}$ has the following decomposition:
\begin{align}
    Q_{u,z} = \lambda(u,z) \upsilon(u,z) \otimes \varphi(u,z) + N_{u,z},\label{rpf decomposition}
\end{align}

where
\begin{enumerate}
    \item The leading eigenvalue $\lambda(u,z)$ is simple and isolated.
    \item $\upsilon(u,z)\in\mathfrak B$ and functionals
    $\varphi(u,z)\in\mathfrak B'$ satisfy $$Q_{u,z}\upsilon(u,z)=\lambda(u,z)\upsilon(u,z), \text{ and } \qquad \varphi(u,z)Q_{u,z}
=\lambda(u,z)\varphi(u,z),$$ 
    with the normalization $\langle\varphi(u,z),\upsilon(u,z)\rangle=1.$ 
    \item $\mathfrak{B} = F_{u,z} \oplus H_{u,z}$  for each $u\in [0,1]$, where
    $$F_{u,z} = \operatorname{span}\{ \upsilon(u,z)  \}, \text{ and } \qquad  H_{u,z} = \{ h: \langle  \varphi(u,z),h  \rangle = 0  \}.$$
    \item Denoting $\Pi_{u,z}:= \upsilon(u,z)\otimes \varphi(u,z)$ and $\Pi_{H_{u,z}}:= I-\Pi_{u,z}$, we have 
    $$N_{u,z} = Q_{u,z}\Pi_{H_{u,z}}, \text{ and } \qquad \sup_{u\in [0,1]}\sup_{|z|< \delta}r(N_{u,z})/|\lambda(u,z)|<r$$

    \item
$\sup_{u\in[0,1]}\sup_{|z|< \delta}\sup_{n\in \mathbb Z^+}\left\|\frac{\partial^2}{\partial z^2}N_{u,z}^n\right\|_\infty\leq C.$ 
    \item The spectral data are $C^1$ (analytic) in the  parameter $z$. In particular,   
    $$ z\mapsto\lambda(u,z),   \qquad  z\mapsto\upsilon(u,z), \qquad z\mapsto\varphi(u,z), \qquad z\mapsto N_{u,z}$$
     are analytic for every $u\in[0,1]$, and for each $j=1,2,3$, 
$$\sup_{u\in[0,1]}  \sup_{|z|< \delta} |\partial_z^j\lambda(u,z)|  \leq C,$$
    $$\sup_{u\in[0,1]} \sup_{|z|< \delta} \|\partial_z^j\upsilon(u,z)\|_\infty \leq C, $$
   $$\sup_{u\in[0,1]} \sup_{|z|< \delta} \|\partial_z^j\varphi(u,z)\|_{\mathfrak B'} \leq C,$$
    and
    $$\sup_{u\in[0,1]} \sup_{|z|< \delta} \|\partial_z^jN_{u,z}\|_\infty \leq C. $$
 \item The spectral data are Lipschitz in the parameter $u$. That is,
    $$ |\lambda(u,z)-\lambda(v,z)| \leq C|u-v|, $$
    $$ \|\upsilon(u,z)-\upsilon(v,z)\|_\infty \leq C|u-v|,$$
    $$\|\varphi(u,z)-\varphi(v,z)\|_{\mathfrak B'} \leq C|u-v|,$$
    and
    $$ \|N_{u,z}-N_{v,z}\|_\infty\leq C|u-v|.$$
\end{enumerate}

\end{theorem}
\begin{proof}
Recall the notation
$H_{u} = \{ h \in \mathfrak B: \langle  \pi_u,h  \rangle = 0  \}$.
 By  Lemma \ref{Spectral Decomposition Lemma}, because $1$ is not in the spectrum of $Q_u|_{H_u}$, and  by the uniform spectral gap of $Q_u$,  there are constants $A>0$ and $\rho\in(0,1)$ such that for all $n$, we have 
\begin{equation}\label{Uni}
 \sup_{u\in[0,1]}\|Q_u^n-\pi_u\|_\infty\leq A\rho^n.   
\end{equation}
Fix some $C>2$ and consider the cone 
$$
\mathcal K_C=\{h\geq 0: h(y)\leq Ch(x),\,\,\forall x,y\in E\}.
$$
Let $h\in\mathcal K_C$ and suppose $\|h\|_\infty=1$. Then for all $u$ we have $\pi_u(h)\geq C^{-1}$. Therefore by \eqref{Uni} there exists $N_C$ such that for all $n\geq N_C$ and all $u\in[0,1]$, we have
$$
Q_u^n\mathcal K_C\subset\mathcal K_{C/2}.
$$
Now, recall that there is $0<d_C<\infty$ such that the projective diameter of $\mathcal K_{C/2}$ inside $\mathcal K_{C}$ does not exceed $d_C$, see \cite[Lemma 6.5.1]{HK}. We also refer to \cite[Appendix A]{HK} for the definition of the projective metric associated with a cone $\mathcal K$.

Let $\Gamma_C$ be the set of all linear functionals $s\in\mathcal B'$ defined by either $s(f)=f(x_0)$ for some $x_0\in E$ or by $s(f)=f(x_1)-C^{-1}f(x_2)$ for some $x_1,x_2\in E$. Then 
$$
\mathcal K_C=\left\{f\in\mathcal B: s(f)\geq 0,\,\,\forall s\in\Gamma_C\right\}.
$$
 Next, we claim that  there are constants $B,b>0$ which depend only on  the function $g$ such that for all $n\geq N_C$,  $h\in\mathcal K_C$, $s\in\Gamma_C$,  $z\in\mathbb C$ such that $|z|n\leq b$, and $u\in[0,1]$ we have 
\begin{equation}\label{comparison}
|s(Q_u^n h)-s(Q_{u,z}^n h)|\leq C^4B|z|ns(Q_u^n h).    
\end{equation}
Let us prove \eqref{comparison}. Let $h\in\mathcal K_C$.
We first consider the case when $s$ has the form $s(f)=f(x_0)$ for some $x_0\in E$. Let $Y_{j,u}$ be a stationary Markov chain with transition operator $Q_u$. Then 
for any $x\in E$, $u\in[0,1]$, and $z\in\bbC$, we have
\begin{eqnarray}\label{Relying on...}
|s(Q_{u,z}^{n}h)-s(Q_u^{n}h)|=
|Q_{u,z}^{n}h(x_0)-Q_u^{n}h(x_0)|
\\=\left|\mathbb E[h(e^{z\sum_{j=1}^ng(u,Y_{j,u})}-1)|Y_{0,u}=x_0]\right|\nonumber\\\leq
\left\|e^{z\sum_{j=1}^ng(u,Y_{j,u})}-1 \right\|_\infty
Q_u^{n}h(x_0)= \left\|e^{z\sum_{j=1}^ng(u,Y_{j,u})}-1 \right\|_\infty
s(Q_u^{n}h)\nonumber,
\end{eqnarray}
where we used that $h$ is nonnegative.
By the mean value theorem, when $|z|n\sup|g|\leq 1$, we have 
\begin{equation}\label{z V om est}
\|e^{z\sum_{j=1}^ng(u,Y_{j,u})}-1\|_\infty\leq e|z|n\sup|g|.
\end{equation}

Next, 
consider the case when $s$ has the form $s(f)=f(x_0)-C^{-1}f(x_0')$
for some $x_0,x_0'\in E$.
Since $Q_u^{n}h\in \cK_{C/2}$, 
\begin{eqnarray}\label{FIR}
s(Q_u^{n}h)=Q^{n}_uh(x_0)-C^{-1}Q_u^{n}h(x_0')
\geq
(2C^{-1}-C^{-1})Q_u^{n}h(x_0')=
C^{-1}Q_u^{n}h(x_0').
\end{eqnarray}
By the definition of $Q_{u,z}$ and \eqref{Relying on...}, we have
\begin{eqnarray*}
|s(Q_{u,z}^{n}h)-s(Q_u^{n}h)|\\=
\big|Q_{u,z}^{n}h(x_0)-	C^{-1}Q_{u,z}^{n}h(x_0')-
\big(Q_u^{n}h(x_0)-C^{-1}Q_u^{n}h(x_0')\big)\big|\\=
\big|\big(Q_{u,z}^{n}h(x_0)-Q_u^{n}h(x_0)\big)-C^{-1}
\big(Q_{u,z}^{n}h(x_0')-Q_u^{n}h(x_0')\big)\big|\\\leq
\|e^{z\sum_{j=1}^ng(u,Y_{j,u})}-1\|_\infty\big(Q_u^{n}h(x_0)+
C^{-1}Q_u^{n}h(x_0')\big)\\\leq
\|e^{z\sum_{j=1}^ng(u,Y_{j,u})}-1\|_\infty\big((C/2)Q_u^{n}h(x_0')+
C^{-1}Q_u^{n}h(x_0')\big)\\=
\|e^{z\sum_{j=1}^ng(u,Y_{j,u})}-1\|_\infty (C/2+C^{-1})Q_u^{n}h(x_0'),
\end{eqnarray*}
where in the second inequality we used that $Q_u^{n}h\in\cK_{C/2}$.
The estimate \eqref{comparison} for this type of $s$   follows now from \eqref{z V om est} and
\eqref{FIR}.

Next, 
let $\mathcal K_{C,\mathbb C}$ be the canonical complexification of $\mathcal K_C$ (see \cite[Appendix A]{HK}).  Then, using \cite[Lemmata 6.4.1, 6.4.2]{HK}, using \eqref{comparison}, and then arguing like in \cite[Section 6.7]{HK}, 
we see that the conditions of \cite[Theorems 4.2.1 and 4.2.2]{HK} hold for the operators $Q_{u,z}$ for $z$ belonging to a neighborhood of the complex origin that does not depend on $u$ (say, for $|z|\leq r_0$ for some $r_0>0$) and with constants that do not depend on $u$.
Applying \cite[Theorems 4.2.1 and 4.2.2]{HK} with the operators $Q_{u,z}$ and the cones $\mathcal K_{C,\mathbb C}$,  we conclude that for all $z$ such that $|z|\leq r_0$ and all $u\in[0,1]$ there are uniformly bounded in $u$ and analytic in  $z$ nonzero complex numbers $\lambda(u,z)$, functions $\upsilon(u,z)\in \mathcal K_{C,\mathbb C}$ and linear functionals $\varphi_{u,z}\in (\mathcal K_{C,\mathbb C})^*$ such that $\langle\varphi_{u,z}, h_{u,z}\rangle=\langle\varphi_{u,z}, \textbf{1}\rangle=1$, $\lambda(u,0)=1$, $\upsilon(u,0)=\textbf{1}$ and $\varphi_{u,0}=\pi_u$, and all the properties in the theorem hold except for the Lipschitz continuity in $u$ and the behavior of the derivatives. To see, for instance, why Condition 4 holds, note that by analyticity and uniform boundedness, and since $\lambda(u,0)=1$, there exists a constant $C_0>0$ such that $\sup_{u\in[0,1]}|\lambda(u,z)-1|\leq C|z|$. Thus, we get this condition by possibly decreasing $r_0$.
\vskip0.1cm
\textbf{Second derivatives}: 
Recall that by the analyticity in $z$ and Lemma \ref{Derivatives of twisted transfer operator lemma}, there exists $C_1>0$ such that $$\sup_{u\in[0,1]}\sup_{|z|< \delta_5}\left( \|\partial_z N_{u,z}\|_\infty + \|\partial_z^2 N_{u,z}\|_\infty     \right) \leq C_1.$$   
Now, by Lemma \ref{Spectral Decomposition Lemma} and the continuity of $N_{u,z}$ in $(u,z)$, there exists $r_1\in(0,1)$ and $m_0$ large enough such that 
\begin{align}
    \sup_{|z|< r_1} \sup_{u\in [0,1]}r(N_{u,z})\leq \|N_{u,z}^{m_0}\|_\infty^{1/m_0}<r_1. \label{Spectral radius twisted N}
\end{align}
We thus decompose each $n\in \mathbb Z^+$ as $n = qm_0 + \ell$.  Thus, it can be shown that there exists $r < r_1 < 1$ such that 
$$\sup_{|z|< r_1} \sup_{u\in [0,1]} r(N_{u,z} ) < r < r_1.$$

Now, by \eqref{Spectral radius twisted N}, and by differentiating the product $N_{u,z}^n$ twice, we obtain 
\begin{align*}
\partial_z^2 N_{u,z}^n
&= \sum_{j=0}^{n-1} N_{u,z}^j 
(\partial_z^2 N_{u,z}) N_{u,z}^{n-1-j} + 2\sum_{\substack{i,j,k\geq0\\i+j+k=n-2}} N_{u,z}^i (\partial_z N_{u,z}) N_{u,z}^j(\partial_z N_{u,z}) N_{u,z}^k.
\end{align*}
Thus,
\begin{align*}
\left\|\partial_z^2 N_{u,z}^n\right\|_\infty
&\leq \sum_{j=0}^{n-1} \|N_{u,z}^j\|_\infty \|
\partial_z^2 N_{u,z}\|_\infty \|N_{u,z}^{n-1-j}\|_\infty\\
&+ 2\sum_{\substack{i,j,k\geq0\\i+j+k=n-2}} \|N_{u,z}^i\|_\infty \| \partial_z N_{u,z}\|_\infty \|N_{u,z}^j\|_\infty \|\partial_z N_{u,z}\|_\infty \|N_{u,z}^k\|_\infty \\
&\leq C^2 C_1 \sum_{j=0}^{n-1} \rho^j\rho^{n-1-j} + 2C^3C_1^2 \sum_{\substack{i,j,k\geq0\\i+j+k=n-2}} \rho^{i+j+k}\\
&\leq C^2 C_1n\rho^{n-1} + 2C^3C_1^2n^2\rho^{n-2}.
\end{align*}
Thus, there exists $C_2 > 0$ such that
$$\sup_{u\in [0,1]} \sup_{|t| < \delta_5} \| \partial_z^2 N_{u,z}^n\|_\infty \leq C_2 n^2 \rho^{n-2}.$$

\textbf{Lipschitz estimates}: First, for each $u,v\in [0,1]$, we write \begin{align*}
    (Q_{u,z} - Q_{v,z})f(x) &= \int e^{zg(u,y)} f(y) (Q_u - Q_v)(x,dy)\\
    &+ \int (e^{zg(u,y)} - e^{zg(v,y)})f(y) Q_v(x,dy).
\end{align*}
Thus, for $|z| < \delta,$
$$\| Q_{u,z} - Q_{v,z}   \|_\infty \leq  e^{K\delta}L|u-v| + C_\delta C_g \delta |u-v|,$$
implying that there exists $C_Q > 0$ such that 
$$\sup_{|z| < \delta}\| Q_{u,z} - Q_{v,z}   \|_\infty \leq C_Q |u-v|.$$

Recall that 
\begin{align}
    \upsilon(u,z) = \frac{\Pi_{u,z} \mathbf 1}{\pi_u(\Pi_{u,z} \mathbf 1)}.\label{upsilon projection}
\end{align}
Also, the denominator is uniformly bounded away from zero for small $z$ because at $z=0$, $\pi_u(\Pi_{u,0} \mathbf 1) =  \mathbf 1$. 

Next, we establish Lipschitz continuity for $\upsilon(u,t).$ By the uniform spectral separation, there exists a simple closed contour containing only $\lambda(u,z)$, independent of $u$ and $z$, that contains only $\lambda(u,z)$ and no other eigenvalue of $\sigma(Q_{u,z})$. The spectral projection obtained from the Riesz projection is $\Pi_{u,z} = \frac{1}{2\pi i} \int_\Gamma (\zeta I-Q_{u,z})^{-1}d\zeta.$ Because $$(\zeta I-Q_{u,z})^{-1} - (\zeta I-Q_{v,z})^{-1} = (\zeta I-Q_{u,z})^{-1} (Q_{u,z}-Q_{v,z}) (\zeta I-Q_{v,z})^{-1},$$ we obtain $$\sup_{|z| < \delta_5} \|\Pi_{u,z}-\Pi_{v,z}\|_\infty \leq C_\Pi |u-v|.$$
Thus, the Lipschitz bounds for $\Pi_{u,z}$ and $\pi_u$, along with \eqref{upsilon projection} immediately give 
$$\| \upsilon(u,z) - \upsilon(v,z)\|_\infty\leq C|u-v|.$$

Recalling that $\lambda(u,z) = \langle \pi_u, Q_{u,z}\upsilon_{u,z}\rangle$, we write
$$
\lambda(u,z) -\lambda(v,z)= \langle \pi_u, Q_{u,z}(\upsilon(u,z) -\upsilon(v,z))\rangle 
$$
$$
+ \langle \pi_u, (Q_{u,z} - Q_{v,z})\upsilon(u,z) \rangle+ \langle \pi_u - \pi_v, Q_{v,z}\upsilon(u,z)\rangle. $$
Combined with the Lipschitz continuity of $
\pi_u$, $\upsilon(u,t)$, and $Q_{u,z}$, we obtain 
$$\sup_{|z|< \delta}|\lambda(u,z) - \lambda(v,z)| \leq C |u-v|.$$

Next, define $\tilde{\varphi}:= \pi_u\circ\Pi_{u,z}$, and observe that $\tilde{\varphi}$ is a scalar multiple of $\varphi$. Thus,
\begin{align*}
    \|\tilde{\varphi}(u,z)-\tilde{\varphi}(v,z)\|_{\mathfrak B'} &= \|(\pi_u-\pi_v)\circ\Pi_{u,z} +\pi_v\circ(\Pi_{u,z}-\Pi_{v,z})\|_{\mathfrak B'} \\
    &\leq \|\pi_u-\pi_v\|_{\mathfrak B'} \|\Pi_{u,z}\|_{\infty} + \|\pi_v\|_{\mathfrak B'} \|\Pi_{u,z}-\Pi_{v,z}\|_{\infty} \\
    &\leq C|u-v|.
\end{align*}
The Lipschitz bounds for $\varphi(u,z)$ follow.

Finally, for $N_{u,z} = Q_{u,z}\Pi_{H_{u,z}} = Q_{u,z}(I-\Pi_{u,z})$, we have
\begin{align*}
    \|N_{u,z} - N_{v,z}\|_\infty &= \| Q_{u,z}(I-\Pi_{u,z})  -  Q_{v,z}(I-\Pi_{v,z})\|_\infty \\
    &=\|(Q_{u,z}-Q_{v,z})(I-\Pi_{u,z}) + Q_{v,z}(\Pi_{v,z}-\Pi_{u,z})\|_\infty\\
    &\leq \|Q_{u,z} - Q_{v,z}\|_\infty \|I - \Pi_{u,z}\|_{\infty} + \| Q_{v,z} \|_\infty \|  \Pi_{u,z} - \Pi_{v,z}  \|_\infty,
\end{align*}
giving the Lipschitz estimate for $N_{u,z}$.


\end{proof}

\section{Products of Operators and estimates on characteristic functions}

In this section, we consider products of Markov operators. We use the convention that the product of the operators $\prod_{k=1}^n A_k$ means $A_1A_2\cdots A_n$ and an empty product of operators is the identity. For $p_k\in \mathbb C$, we define $\prod_{j=k}^{k-1}p_j = 1$. 

\begin{lemma}
    Make  Assumptions \ref{MainAssm} (i)-(iv). Consider a product of $M$ consecutive transition operators $Q_{k/n}Q_{(k+1)/n}\cdots Q_{(k+M-1)/n}$. Then for each fixed $M\in \mathbb Z^+$,  for sufficiently large $n\in\mathbb Z^+$, we have
$$\sup_{1\leq k \leq n-M+1} \left\| Q_{k/n}Q_{(k+1)/n}\cdots Q_{(k+M-1)/n} -\Pi_{k/n} \right\|_{\infty} <\frac 12.$$
\end{lemma}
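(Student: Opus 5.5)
The plan is to compare the product of $M$ consecutive operators with the $M$-th power of a single frozen operator $Q_{k/n}$. That reduces the estimate to two ingredients already available: the Lipschitz continuity of $u\mapsto Q_u$ from Assumption \ref{MainAssm} (i), and the uniform spectral gap of Lemma \ref{Spectral Decomposition Lemma}. Concretely, for $1\le k\le n-M+1$ I will write
$$
Q_{k/n}Q_{(k+1)/n}\cdots Q_{(k+M-1)/n}-\Pi_{k/n}
=\Big(\prod_{j=0}^{M-1}Q_{(k+j)/n}-Q_{k/n}^M\Big)+\big(Q_{k/n}^M-\Pi_{k/n}\big)
$$
and bound each bracket by $\tfrac14$.

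\textbf{First bracket.} I will use the telescoping identity
$$
\prod_{j=0}^{M-1}Q_{(k+j)/n}-Q_{k/n}^M=\sum_{j=1}^{M-1}\Big(\prod_{i=0}^{j-1}Q_{(k+i)/n}\Big)\big(Q_{(k+j)/n}-Q_{k/n}\big)Q_{k/n}^{M-1-j}.
$$
Every Markov operator has $\|\cdot\|_\infty$-norm $1$. Assumption \ref{MainAssm} (i) gives $\|Q_{(k+j)/n}-Q_{k/n}\|_\infty\le Lj/n$. Summing over $j$, the first bracket is at most $LM^2/(2n)$, uniformly in $k$. All indices $(k+j)/n$ lie in $[0,1]$ because $k\le n-M+1$.

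\textbf{Second bracket.} Lemma \ref{Spectral Decomposition Lemma} gives $\sup_u\|Q_u^M-\Pi_u\|_\infty\le C\rho^M$, again uniformly in $u=k/n$. Hence the whole expression is bounded by $LM^2/(2n)+C\rho^M$.

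\textbf{The obstacle and how I will handle it.} The only real issue is the role of $M$. The bound $C\rho^M<\tfrac14$ requires $M$ to be large enough, since for $M=1$ one only has $\|Q_u-\Pi_u\|_\infty\le 2$ in general. So I will read the statement as holding for each fixed $M\ge M_0$, where $M_0$ is chosen with $C\rho^{M_0}<\tfrac14$. This is the form in which the lemma will be used: choose such an $M$ once and for all. For $M=qm$ one can alternatively use $\|Q_u^{qm}-\Pi_u\|_\infty=\|N_u^{qm}\|_\infty\le r^q$ directly from the proof of that lemma.

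With $M$ fixed, I then take $n\ge n_0(M):=\lceil 2LM^2\rceil+1$, which makes the first bracket smaller than $\tfrac14$. Together the two bounds give the desired estimate $<\tfrac12$, uniformly over $1\le k\le n-M+1$.
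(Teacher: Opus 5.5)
Your proof is correct and follows the paper's argument. You telescope the product against $Q_{k/n}^M$ to get an error of order $LM^2/n$ from Assumption~\ref{MainAssm}~(i), then use the uniform bound $\|Q_u^M-\Pi_u\|_\infty\le C\rho^M$, choosing $M$ large first and $n$ large second. Your caveat that the claim can only hold for $M\ge M_0$, not for every fixed $M$, is right: the paper's own proof makes the same restriction implicitly when it says ``choose $M$ large enough so that $C\rho^M<1/4$''.
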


\begin{proof}
    Fix $M, n \in \mathbb Z^+$. The continuity assumption gives
$$\|Q_{(k+\ell)/n}-Q_{k/n}\|_\infty\leq \frac{L\ell}{n}$$ for  $0\leq\ell\leq M-1$.
Thus, $$\max_{0\leq\ell\leq M-1} \|Q_{(k+\ell)/n}-Q_{k/n}\|_{\infty} =O\left(\frac{M}{n}\right).$$  

Furthermore, we have the following telescoping representation:  
\begin{align}
&Q_{k/n}Q_{(k+1)/n}\cdots Q_{(k+M-1)/n} -Q_{k/n}^M \nonumber \\
&=\sum_{\ell=0}^{M-1}Q_{k/n}^{\ell} \left(Q_{(k+\ell)/n}-Q_{k/n}\right) Q_{(k+\ell+1)/n}\cdots Q_{(k+M-1)/n}. \nonumber
\end{align}
By the submultiplicativity of the operator norm, we have 
\begin{align}
\left\| Q_{k/n}Q_{(k+1)/n}\cdots Q_{(k+M-1)/n} -Q_{k/n}^M \right\|_\infty   \leq \frac{L}{n} \frac{M(M-1)}{2}    := \frac{C_M}{n}. \label{approximation 2}
\end{align}

Observe now that by Assumption \ref{MainAssm} (ii) (which establishes a Dobrushin contraction), there exists a uniform exponential spectral gap on $\mathfrak B$ for $Q_u$. This then implies that there exist  $\rho \in (0,1)$ and $C>0$ such that for each $M\in \mathbb Z^+$ and $u\in[0,1],$ 
\begin{align}
   \|Q_u^M - \Pi_u\|_\infty \leq C\rho^M. \label{approximation 1}
\end{align} 
Thus, combining the bounds  \eqref{approximation 2} and \eqref{approximation 1}, we have shown that 
\begin{align*}
    \left\| Q_{k/n}Q_{(k+1)/n}\cdots Q_{(k+M-1)/n} -\Pi_{k/n} \right\|_{\infty} \leq C\rho^M+\frac{C_M}{n}.
\end{align*} Choose $M$ large enough so that $C\rho^M < 1/4$, and then choose $n$ large enough so that $C_M/n< 1/4$. Then,
\begin{align*}
    \sup_{1\leq k \leq n-M+1}\left\| Q_{k/n}Q_{(k+1)/n}\cdots Q_{(k+M-1)/n} -\Pi_{k/n} \right\|_{\infty}< \frac 12.
\end{align*}
\end{proof}

In proving a local limit theorem, we will consider bounds for products of the twisted operators in the cases where $t$ is close to $0$ and away from $0$ separately. Additionally, in proving Edgeworth expansions, we need to consider the case where $|t|$ has some growth in $n$.

\subsection{Estimates for $t$ near $0$}
Denote $\Lambda_n(z)=\ln\mathbb E[e^{zS_n}]$. 
\begin{lemma}\label{GrowthLemma}
Make  Assumptions \ref{MainAssm} (i)-(iv).
For every $m\in\mathbb Z^+$, there exist $\delta_m,c_m>0$ such that for all $n$,
$$
\sup_{t\in[-\delta_m,\delta_m]}|\Lambda_n^{(m)}(it)|\leq c_mn.
$$
\end{lemma}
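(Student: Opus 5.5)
Write $\mathbb E[e^{zS_n}]=\langle \mu, Q_{1/n,z}\cdots Q_{n/n,z}\mathbf 1\rangle$ with $\mu$ the law of $X_{n,0}$. The plan is to expand this product via the parametric Perron--Frobenius decomposition of Theorem \ref{RPF theorem single operator}. The result is a representation
$$
\mathbb E[e^{zS_n}]=\Big(\prod_{k=1}^n\lambda(k/n,z)\Big)\,\big(1+\varepsilon_n(z)\big),
$$
valid on a complex disc $|z|<\delta$ independent of $n$, with $\varepsilon_n$ analytic and $\sup_n\sup_{|z|<\delta}|\varepsilon_n(z)|\le 1/2$. Then
$$
\Lambda_n(z)=\sum_{k=1}^n\ln\lambda(k/n,z)+\ln(1+\varepsilon_n(z))
$$
is analytic on $|z|<\delta$. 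Since $\lambda(u,0)=1$ and $|\partial_z\lambda|\le C$ uniformly in $u$, after shrinking $\delta$ we have $|\lambda(u,z)-1|\le 1/2$. Hence $|\ln\lambda(u,z)|\le C'$ and $|\Lambda_n(z)|\le C'n+\ln 2$ on the disc. Cauchy's estimate on the disc of radius $\delta/2$ around each $it$ with $|t|\le\delta_m:=\delta/2$ then gives $|\Lambda_n^{(m)}(it)|\le m!(2/\delta)^m(C'n+1)\le c_m n$. So the whole lemma reduces to a uniform-in-$n$ analytic control of the product on a fixed complex disc, and all derivative orders are handled at once.

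The core of the argument is to obtain that representation. I would first group the operators into blocks of length $M$, with $M$ fixed and large. Within a block starting at $k$, the telescoping argument of the preceding lemma, combined with the Lipschitz bound $\|Q_{u,z}-Q_{v,z}\|_\infty\le C_Q|u-v|$ from the proof of Theorem \ref{RPF theorem single operator}, gives
$$
Q_{k/n,z}\cdots Q_{(k+M-1)/n,z}=Q_{k/n,z}^M+O(M^2/n).
$$
By the decomposition \eqref{rpf decomposition},
$$
Q_{k/n,z}^M=\lambda(k/n,z)^M\Pi_{k/n,z}+N_{k/n,z}^M,
$$
and the term $N_{k/n,z}^M$ is exponentially small relative to $|\lambda|^M$. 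A cleaner alternative avoids blocks. Normalize $\tilde Q_k=Q_{k/n,z}/\lambda(k/n,z)$ and track $v_k=\tilde Q_k\cdots\tilde Q_n\mathbf 1$ by its decomposition along $\upsilon(k/n,z)$ and $H_{k/n,z}$. The projection changes by $O(1/n)$ from one step to the next, by the Lipschitz bounds in item 7 of Theorem \ref{RPF theorem single operator}, while the $H$-component contracts at rate $r<1$. This yields $v_1=c_n(z)\upsilon(1/n,z)+w$, where $\|w\|_\infty$ and $|c_n(z)-1|$ are bounded.

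The main obstacle is that, in this step-by-step bookkeeping, the coefficient along $\upsilon$ picks up a factor $(1+O(1/n))$ at each of the $n$ steps. These factors need not tend to $1$, only remain bounded, since $(1+C/n)^n\le e^C$. Boundedness alone does not show that $1+\varepsilon_n$ stays away from $0$. I would handle this by noting that a nonvanishing analytic product is all we need. Since $\lambda(u,z)\ne0$, I would write the coefficient recursion as
$$
c_{k}=\big(1+\eta_k(z)\big)c_{k+1}+\langle\varphi(k/n,z),\text{error from }H\rangle,
$$
with $\eta_k=O(1/n)$ analytic and the $H$-error terms $O(r^{j}/n)$ summable. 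This gives $\ln c_1=\sum_k\ln(1+\eta_k)+O(1)=O(1)$ uniformly, and also on the complex disc after shrinking $\delta$. Finally, $\langle\mu,\upsilon(1/n,z)\rangle$ is close to $1$ for small $z$, so $\Lambda_n(z)=\sum_k\ln\lambda(k/n,z)+O(1)$ with an analytic, uniformly bounded remainder, which is exactly what the Cauchy-estimate step requires.
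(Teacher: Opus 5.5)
Your proposal is correct and follows essentially the paper's route: the paper uses the representation \eqref{Recall}, whose overlap factors $\langle\varphi(k/n,z),\upsilon((k+1)/n,z)\rangle=1+O(1/n)$ are exactly your $1+\eta_k$, deduces $|\Lambda_n(z)|\le cn$ on a fixed complex disc, and finishes with the Cauchy integral formula, as you do. You are more careful than the paper about the non-vanishing of $\mathbb E[e^{zS_n}]$; the details left to fill in are that contracting the $H$-components under products of different $N_{k/n,z}$ needs the block/Lipschitz argument you sketched first (item 4 only controls spectral radii of single operators), and that the finitely many small $n$ must be handled directly, e.g.\ via $|\mathbb E[e^{zS_n}]-1|\le e^{|z|nK}-1$.
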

\begin{proof}
We have 
$$
E[e^{it S_n}]=\mathbb E\left[\left(\prod_{k=1}^{n}Q_{k/n,it}\right)\textbf{1}\right].
$$
Now, arguing like in the proof of \cite[Proposition 3.1]{koralov2025local} we have
\begin{equation}\label{Recall}
\left(\prod_{k=1}^{n}Q_{k/n,z}\right)\textbf{1}=\prod_{k=1}^{n}\lambda(k/n,z)\prod_{k=1}^{n-1}\langle\varphi(k/n,z),\upsilon((k+1)/n,z)\rangle\cdot \upsilon(z,0)    
\end{equation}
$$
+\prod_{k=1}^{n}\lambda(k/n,z)U_n(z),
$$
where $\|U_n(z)\|_\infty\leq C/n$. Now, as $\langle\varphi(k/n,z),\upsilon(k/n,z)\rangle=1$ we have 
$$
\langle\varphi(k/n,z),\upsilon((k+1)/n,z)\rangle=1+O(1/n).
$$
Moreover, as $\upsilon(0,0)=1$ we have $\upsilon(z,0)=1+O(|z|)$. Furthermore, as $\lambda(k/n,0)=1$, we have
$$
\lambda(k/n,z)=1+O(|z|).
$$
Therefore, taking the logarithm, we see that there exist $c,\delta,N_0>0$ such that if $|z|<\delta$ and $n\geq N_0$, then
$$
|\Lambda_n(z)|\leq cn.
$$
Take $m\in\mathbb N$.
Using the Cauchy integral formula, we conclude that, on possibly a smaller neighborhood of the origin, we have 
$$
|\Lambda_n^{(m)}(z)|\leq c_mn
$$
for some constant $c_m>0$.
\end{proof}

\subsection{Estimates for $t$ away from $0$}
\begin{lemma}\label{t cpt}
Make  Assumptions \ref{MainAssm} (i)-(v).
For every compact set $K\subset\mathbb R\setminus\{0\}$, there are constants $C_K,c_k>0$ such that for all $n$,
$$
\sup_{t\in K}\left\|\prod_{k=1}^{n}Q_{k/n,it}\right\|_\infty\leq C_Ke^{-c_K n}.
$$
\end{lemma}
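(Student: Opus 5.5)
The plan is to reduce the product of slowly varying operators to powers of a single frozen operator on blocks of fixed length $M$, and then to use the non-arithmetic condition (v) through a compactness argument in $(u,t)$.

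\textbf{Step 1: a uniform bound for powers of frozen operators.} Consider the map $(u,t)\mapsto Q_{u,it}$ on $[0,1]\times K$. It is continuous in operator norm, since
\[
\|Q_{u,it}-Q_{v,is}\|_\infty\le L|u-v|+C_g|t||u-v|+K|t-s|,
\]
which follows as in the Lipschitz part of the proof of Theorem \ref{RPF theorem single operator}. Assumption \ref{MainAssm}(v) gives $r(Q_{u,it})<1$ for each $(u,t)$. By Gelfand's formula, for each $(u_0,t_0)$ there is $M_0$ with $\|Q_{u_0,it_0}^{M_0}\|_\infty<1/2$.

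Because $\|Q_{u,it}\|_\infty\le 1$, the map $(u,t)\mapsto Q_{u,it}^{M_0}$ is continuous. So the bound $\|Q^{M_0}_{u,it}\|_\infty<1/2$ persists on an open neighbourhood of $(u_0,t_0)$. Compactness of $[0,1]\times K$ gives a finite cover by such neighbourhoods, with integers $M_1,\dots,M_p$.

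Set $M=\prod_i M_i$. Using $\|Q_{u,it}\|_\infty\le1$ and submultiplicativity, this gives
\[
\sup_{u\in[0,1],\,t\in K}\|Q_{u,it}^{M}\|_\infty\le \tfrac12 .
\]
This uniformity in $(u,t)$ is the key point. Pointwise $r<1$ alone does not control the block length uniformly, and the compactness argument is what resolves this.

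\textbf{Step 2: comparing blocks with frozen powers.} Take a block $Q_{k/n,it}\cdots Q_{(k+M-1)/n,it}$. Compare it with $Q_{k/n,it}^M$ using the same telescoping identity as in the previous lemma. Each difference satisfies $\|Q_{(k+\ell)/n,it}-Q_{k/n,it}\|_\infty\le C_K\ell/n$, with $C_K=L+C_g\sup_{t\in K}|t|$, and all factors have norm at most $1$. Hence
\[
\|Q_{k/n,it}\cdots Q_{(k+M-1)/n,it}-Q_{k/n,it}^M\|_\infty\le C_KM^2/n .
\]
For $n\ge N_K$ the right side is at most $1/4$, so every such block has norm at most $3/4$, uniformly in $k$ and $t\in K$.

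\textbf{Step 3: assembling the product.} Write $n=qM+\ell$ with $0\le\ell<M$. Split the product into $q$ consecutive blocks of length $M$ and a remainder of norm at most $1$. This gives
\[
\Big\|\prod_{k=1}^nQ_{k/n,it}\Big\|_\infty\le (3/4)^{q}\le \tfrac43\, e^{-n\ln(4/3)/M}.
\]
For the finitely many $n<N_K$, the trivial bound $1$ holds and is absorbed by enlarging $C_K$. This yields the claim with $c_K=\ln(4/3)/M$.
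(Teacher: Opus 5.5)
Your proposal is correct and follows essentially the same route as the paper. Both arguments first obtain a uniform bound on powers of the frozen operators $Q_{u,it}$ over the compact set $[0,1]\times K$; the paper gets this by appealing to upper semicontinuity of the spectral radius, and your Gelfand-formula-plus-finite-cover argument makes that step explicit. Both then compare blocks of consecutive operators $Q_{k/n,it}\cdots Q_{(k+M-1)/n,it}$ with frozen powers and conclude by submultiplicativity, where your explicit telescoping bound $C_KM^2/n$ and the decomposition $n=qM+\ell$ give a cleaner version of the paper's block step.
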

\begin{proof}
Arguing like in the proof of \cite[Corollary III.13]{HH}, 
using the upper semi-continuity of the spectral radius and the continuity of $(u,t)\to Q_{u,it}$, we see that there are $A_K,a_K>0$ such that for all $n$,
$$
\sup_{u\in[0,1]}\sup_{t\in K}\|Q_{u,it}^n\|_\infty\leq A_Ke^{-a_K n}.
$$
Now, let us take $n_K$ such that $A_Ke^{-a_K n}<1/4$. Let us take $\delta_K>0$ such that for all $0\leq u_0,u_1, \cdots,u_{n_K}\leq 1$ such that $|u_i-u_0|\leq \delta_K$, we have 
$$
\sup_{t\in K}\left\|Q_{u_0,it}^{n_K}-Q_{u_1,it}\circ Q_{u_2,it}\circ\cdots Q_{u_{n_K}, it}\right\|_{\infty}<\frac14.
$$
Then
$$
\sup_{t\in K}\left\|Q_{u_1,it}\circ Q_{u_2,it}\circ\cdots Q_{u_{n_K},it}\right\|_\infty<\frac12.
$$
We thus conclude that for all $k$ and $n$ such that $k+n_K<n$, when $n$ is large enough we have 
$$
\sup_{t\in K}\left\|Q_{k/n,it}\circ Q_{(k+1)/n,it}\circ\cdots \circ Q_{(k+n_K)/n,it}\right\|_\infty<\frac12.
$$
Now the result follows from the submultiplicativity of operator norms.
\end{proof}
\subsection{General estimates for uniformly elliptic Markov chains with applications for growing domains of $t$}
In this section we will mostly recall some known results under additional ellipticity conditions on the operators $Q_u$. We begin with the following result for (not necessarily uniform) lower $\psi$-mixing operators $Q_u$.
\begin{lemma}[Proposition 10 in \cite{PelLLT}]\label{PelLemma}
 Suppose that there is $a_n\in(0,1]$ such that for all $k\leq n$,
 $$
Q_{k/n}(x,\Gamma)\geq a_n\mathbb P(X_{k,n}\in\Gamma)
 $$
 for all $x\in E$ and all measurable sets $\Gamma\in\mathcal B(E)$. Then, for all real $t$,
 $$
\left|\mathbb E[e^{itS_n}]\right|\leq\exp\left(-\frac{a_n^4}{16}\sum_{k=1}^{n}\left(1-\left|\mathbb E[e^{it g(k/n,X_{k,n})}]\right|^2\right)\right). 
 $$
\end{lemma}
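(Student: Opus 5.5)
The plan is to follow the classical argument behind this bound, which is cited from \cite{PelLLT}, adapted to the row $(X_{n,k})_{k\le n}$. Write $\xi_k=g(k/n,X_{n,k})$ and $\nu_k$ for the law of $X_{n,k}$. The hypothesis says that each kernel splits as
$$Q_{k/n}(x,\cdot)=a_n\nu_k(\cdot)+(1-a_n)R_k(x,\cdot),$$
where $R_k$ is again a Markov kernel. The approach is to extract, over blocks of two consecutive steps, a contraction factor for the characteristic function of order $1-c\,a_n^{2}\bigl(1-|\mathbb E e^{it\xi_k}|^2\bigr)$, and then multiply these factors over the blocks.

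The key steps, in order, are as follows.

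\textbf{Step 1 (operator form).} Write $\mathbb E[e^{itS_n}]$ as $\langle \mu, Q_{1/n,it}\cdots Q_{n/n,it}\mathbf 1\rangle$, using the lemma in the preliminaries.

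\textbf{Step 2 (pairing and symmetrisation).} Group the indices in pairs $(2j-1,2j)$. For a pair, condition on the two neighbouring states $X_{n,2j-2}=x$ and $X_{n,2j+1}=y$. By the Markov property, the conditional characteristic function of $\xi_{2j-1}+\xi_{2j}$ given everything outside the block depends only on the endpoints. Taking the modulus squared and introducing an independent copy, one has
$$1-|\phi|^2=\tfrac12\,\mathbb E\bigl|e^{itW}-e^{itW'}\bigr|^2,$$
where $W$ and $W'$ are independent copies of the variable whose characteristic function is $\phi$.

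\textbf{Step 3 (lower bound via minorization).} Use the minorization at the step into $X_{n,2j-1}$: with probability at least $a_n$, this state is a fresh draw from $\nu_{2j-1}$, independent of the past. The minorization on the following transition also gives a density lower bound of order $a_n$ for the bridge (conditioning on the right endpoint). Together these yield
$$\mathbb E\bigl|e^{itW}-e^{itW'}\bigr|^2\gtrsim a_n^{2}\bigl(1-|\mathbb E e^{it\xi_{2j-1}}|^2\bigr).$$
Hence the conditional modulus satisfies
$$|\phi_j(x,y)|\le \bigl(1-\tfrac{a_n^{4}}{8}\alpha_{2j-1}\bigr)^{1/2},$$
where $\alpha_k:=1-|\mathbb E e^{it\xi_k}|^2$. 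The extra powers of $a_n$ come from the bridge normalisation, since the density ratio is bounded between $a_n$ and $1/a_n$. Doing the same with the odd pairing handles the even indices.

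\textbf{Step 4 (iteration).} Condition successively on the states at the block endpoints, giving
$$|\mathbb E e^{itS_n}|\le\prod_j \sup_{x,y}|\phi_j(x,y)|\le \exp\Bigl(-\tfrac{a_n^4}{16}\sum_{j}\alpha_{2j-1}\Bigr),$$
using $1-s\le e^{-s}$. Averaging the odd and even pairings via Cauchy--Schwarz (geometric mean) then produces the full sum over $k$ with the constant $a_n^4/16$.

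The main obstacle is Step 3. The conditioning on both endpoints destroys the simple Doeblin splitting, so one must check that the bridge law still dominates $a_n^2$ times the product measure $\nu_{2j-1}\otimes(\cdot)$. This follows from the minorization together with the upper bound $Q\le$ (total mass), giving ratio bounds $a_n$ and $a_n^{-1}$. If the constants do not match exactly, I would simply invoke \cite[Proposition 10]{PelLLT} directly: that result is stated for arbitrary inhomogeneous chains with lower $\psi$-mixing, and the row $(X_{n,k})_{k}$ is such a chain, with $a_n$ fixed in $k$.
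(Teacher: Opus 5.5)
The paper does not prove this lemma; it quotes it from \cite[Proposition 10]{PelLLT}. Your closing fallback is therefore exactly the paper's route. Your own reconstruction, however, has a genuine gap in Steps 3--4.

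\textbf{Where it fails.} The hypothesis is a one-sided minorization $Q_{k/n}(x,\cdot)\ge a_n\nu_k$. It gives no upper bound on the density of $Q_{k/n}(z,\cdot)$ with respect to $\nu_k$: the bound $Q\le 1$ controls total mass, not densities. So your ratio bounds $a_n$, $a_n^{-1}$ for the bridge are false, and the law of a block given fixed endpoint states can degenerate.

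\textbf{Counterexample.} Fix $a\in(0,1)$, small $\eta>0$, and write $Q_k$ for the transition into $X_k$. Let
$\nu_{k-1}=\eta\delta_{x_0}+(1-\eta)\delta_{x_1}$, $\nu_k=\eta\delta_{z_0}+\tfrac{1-\eta}{2}(\delta_{z_1}+\delta_{z_2})$ and $\nu_{k+1}=\eta\delta_{y_0}+(1-\eta)\delta_{y_1}$. Define the kernels by
$Q_k(x_0,\cdot)=a\nu_k+(1-a)\delta_{z_0}$, $Q_k(x_1,\cdot)=a\nu_k+\tfrac{1-a}{2}(\delta_{z_1}+\delta_{z_2})$,
$Q_{k+1}(z_0,\cdot)=a\nu_{k+1}+(1-a)\delta_{y_0}$, and $Q_{k+1}(z_i,\cdot)=a\nu_{k+1}+(1-a)\delta_{y_1}$ for $i=1,2$.
\begin{itemize}
\item These are consistent ($\nu_{k-1}Q_k=\nu_k$, $\nu_kQ_{k+1}=\nu_{k+1}$) and satisfy the hypothesis with constant $a$.
\item The event $\{X_{k-1}=x_0,\,X_{k+1}=y_0\}$ has probability about $\eta(1-a)^2>0$. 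On it, the conditional law of $X_k$ puts mass $1-O(\eta)$ on $z_0$.
\item With $g(z_0)=g(z_1)=0$ and $g(z_2)=\pi/t$, the conditional characteristic function there has modulus $1-O(\eta)$, while $\alpha_k=1-\eta^2$.
\item Adding one more step of the same type produces the same phenomenon for your two-site blocks.
\end{itemize}
Hence no bound on $\sup_{x,y}|\phi_j(x,y)|$ in terms of $\alpha_{2j-1}$ exists, and the product of suprema in Step 4 can be close to $1$. The bad endpoint configurations are rare, so you must average over them rather than take a supremum.

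\textbf{A further problem in Step 2.} The endpoints $2j-2$ and $2j+1$ lie inside the neighbouring blocks. So you cannot condition on ``everything outside the block'' for all blocks at once; conditionally independent blocks need separator sites that belong to no block.

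\textbf{A repair: condition on coins, not states.} Assume $a_n<1$; if $a_n=1$ the row is independent and the claim is immediate. Write $Q_{k/n}(x,\cdot)=a_n\nu_k+(1-a_n)R_k(x,\cdot)$. Generate the row with i.i.d.\ Bernoulli$(a_n)$ coins $\varepsilon_k$: if $\varepsilon_k=1$, draw $X_{n,k}\sim\nu_k$ afresh; otherwise draw it from $R_k(X_{n,k-1},\cdot)$.
\begin{itemize}
\item On $\{\varepsilon_k=\varepsilon_{k+1}=1\}$ (on $\{\varepsilon_n=1\}$ if $k=n$), $X_{n,k}$ has law $\nu_k$. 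Given the coins, it is independent of all other states, since it uses only its own randomness and $X_{n,k+1}$ ignores it.
\item Hence $|\mathbb E[e^{itS_n}\mid\varepsilon]|\le\prod_k|\phi_k|$, the product running over all such $k$, where $\phi_k=\mathbb E e^{it\xi_k}$.
\item Keep only even $k$, so the coin pairs are disjoint, and average over the coins. This gives $|\mathbb E e^{itS_n}|\le\prod_{k\ \mathrm{even}}\bigl(1-a_n^2(1-|\phi_k|)\bigr)$, and likewise for odd $k$.
\item Take the geometric mean of the two bounds and use $1-|\phi_k|\ge\tfrac12(1-|\phi_k|^2)$. This yields $|\mathbb E e^{itS_n}|\le\exp\bigl(-\tfrac{a_n^2}{4}\sum_{k=1}^n\alpha_k\bigr)$.
\end{itemize}
This is stronger than the stated bound because $a_n\le 1$.
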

\begin{corollary}\label{PelCor}
Make  Assumptions \ref{MainAssm} (i)-(iv).
 Then there exists a constant $C>0$ such that for all real $t$, we have
$$
\left|\mathbb E[e^{itS_n}]\right|\leq C\exp\left(-\frac{a_n^4}{16}\sum_{k=1}^{n}\left(1-\left|\int e^{it g(k/n,x)}d\pi_{k/n}(x)\right|^2\right)\right).
$$
\end{corollary}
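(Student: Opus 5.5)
Corollary \ref{PelCor} follows from Lemma \ref{PelLemma} once each one-dimensional marginal characteristic function $\mathbb E[e^{itg(k/n,X_{k,n})}]$ is replaced by its locally stationary counterpart $\int e^{itg(k/n,x)}d\pi_{k/n}(x)$. The error of this replacement must be summable over $k$ uniformly in $t$, so that it only contributes a multiplicative constant $C$. (Implicitly the corollary is stated under the hypothesis of Lemma \ref{PelLemma} on $a_n$.)

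\textbf{Step 1 (marginal comparison).} By Theorem \ref{Tru} with $j=1$ and $u=k/n$, we have $\|\pi^{(n)}_{k,1}-\pi_{k/n}\|_{\operatorname{TV}}\leq 2C_1/n$. A sharper route, if needed, is to write the law of $X_{n,k}$ as $\pi_0 Q_{1/n}\cdots Q_{k/n}$ and compare it to $\pi_{k/n}$ by telescoping. Lemma \ref{Lipschitz invariant measure lemma} gives $\|\pi_{j/n}-\pi_{(j+1)/n}\|_{\operatorname{TV}}\le C/n$, and the uniform Dobrushin contraction of Assumption \ref{MainAssm}(ii) forgets the initial error geometrically. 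Together these give $\|\pi^{(n)}_{k,1}-\pi_{k/n}\|_{\operatorname{TV}}\le C(\rho^k+1/n)$ in any case. Since $|e^{itg}|=1$, for all real $t$
$$
\left|\mathbb E[e^{itg(k/n,X_{n,k})}]-\int e^{itg(k/n,x)}d\pi_{k/n}(x)\right|\le \|\pi^{(n)}_{k,1}-\pi_{k/n}\|_{\operatorname{TV}}\le \frac{C'}{n},
$$
uniformly in $t$ and $k$.

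\textbf{Step 2 (summing).} Write $\psi_k=\mathbb E[e^{itg(k/n,X_{n,k})}]$ and $\phi_k=\int e^{itg(k/n,x)}d\pi_{k/n}(x)$. Both have modulus at most one, so $\bigl||\psi_k|^2-|\phi_k|^2\bigr|\le 2|\psi_k-\phi_k|\le 2C'/n$. Summing over $k\le n$ gives
$$
\left|\sum_{k=1}^n(1-|\psi_k|^2)-\sum_{k=1}^n(1-|\phi_k|^2)\right|\le 2C'.
$$
Since $a_n\le1$, inserting this into the bound of Lemma \ref{PelLemma} yields
$$
|\mathbb E[e^{itS_n}]|\le e^{2C'/16}\exp\Bigl(-\tfrac{a_n^4}{16}\sum_k(1-|\phi_k|^2)\Bigr),
$$
which is the claim with $C=e^{C'/8}$.

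The only genuine point is Step 1: the TV error must be $O(1/n)$ for each $k$, uniformly in $t$. A $t$-dependent bound, for example one obtained through Lipschitz continuity in $u$ of the characteristic function, would grow like $|t|$ and would not suffice. This is why the comparison is made at the level of measures, via Theorem \ref{Tru}, rather than at the level of characteristic functions. The boundary indices (small $k$, or the convention for $X_{n,0}\sim\pi_0$) contribute only an $O(1)$ total, which is absorbed into $C$.
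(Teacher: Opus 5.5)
Your proof is correct and follows the paper's argument. You apply Theorem \ref{Tru} with $j=1$ and $u=k/n$ to get a bound of order $1/n$ on $\|\pi^{(n)}_{k,1}-\pi_{k/n}\|_{\operatorname{TV}}$ that is uniform in $t$, so the two sums in the exponent of Lemma \ref{PelLemma} differ by $O(1)$, which goes into $C$. Your bookkeeping via $\bigl||\psi_k|^2-|\phi_k|^2\bigr|\le 2|\psi_k-\phi_k|$ gives a discrepancy of $2C'$, which is slightly more careful than the paper's stated $C_1$, and you are right that the hypothesis on $a_n$ from Lemma \ref{PelLemma} is implicitly assumed in the corollary.
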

\begin{proof}
By Theorem  \ref{Tru}, we have 
$$
\|\pi^{(n)}_{k,1}-\pi_{k/n}\|_{\text{TV}}\leq C_1/n.
$$
Thus 
$$
\sum_{k=1}^{n}\left(1-\left|\mathbb E[e^{it g(k/n,X_{k,n})}]\right|^2\right)\geq 
\sum_{k=1}^{n}\left(1-\left|\int e^{it g(k/n,x)}d\pi_{k/n}(x)\right|^2\right)-C_1,
$$
and the conclusion follows.
\end{proof}

The next result concerns the case when $E$ is smooth, each $Q_u$ has strictly positive bounded transition densities, and $g(u,x)$ is sufficiently regular in $x$. 
\begin{lemma}[Proposition 40 in \cite{dolgopyat2023berry}]\label{DH Lemma}
Suppose that the state space $E$ of the chain is a compact  Riemannian manifold and that 
$$
Q_u(x,\Gamma)=\int_{\Gamma}p_u(x,y)dy,
$$
where $dy$ denotes the normalized volume measure and $p_u$ are uniformly bounded and bounded away from $0$. Assume also that the functions $g_u(x)=g(u,x)$ are uniformly H\"older continuous with some exponent $\alpha\in(0,1]$. There exist $\delta,c_1,C_1>0$ such that for all $t\in\mathbb R$ with $|t|\geq \delta$, we have
$$
|\mathbb E[e^{itS_n}]|\leq C_1e^{-c_1 n|t|^{1-\frac1\alpha}}.
$$
In particular, for all $d<\frac{1+\alpha}{1-\alpha}$ we have
$$
\int_{\delta\leq|t|\leq n^{(d-1)/2}} \left|\mathbb E[e^{itS_n}]/t\right|dt=o(n^{-d/2}).
$$
\end{lemma}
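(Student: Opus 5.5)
The statement has two parts. The exponential bound on $|\mathbb E[e^{itS_n}]|$ is imported from \cite{dolgopyat2023berry}; the work is to check that it applies to our triangular array with constants independent of $n$. The integral estimate is then a direct computation.

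\textbf{The decay bound.} For each fixed $n$, the row $(X_{n,k})_{1\le k\le n}$ is an inhomogeneous Markov chain on the compact Riemannian manifold $E$. Its transition densities are $p_{k/n}$, and the observables are $g_{k/n}=g(k/n,\cdot)$.

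By hypothesis, the constants controlling the problem are uniform in $u$:
\begin{itemize}
\item the upper bound $\sup_u\sup_{x,y} p_u(x,y)$;
\item the lower bound $\inf_u\inf_{x,y}p_u(x,y)>0$;
\item the H\"older constant and exponent $\alpha$ of $g_u$, together with $\sup|g|$.
\end{itemize}
Since each row uses only densities and observables from these families, the rows satisfy the hypotheses of \cite[Proposition 40]{dolgopyat2023berry} with the same constants for every $n$.

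The step to verify carefully is that the constants $\delta,c_1,C_1$ produced there depend only on these bounds, and not on the particular sequence of kernels or on its length. I expect this to hold from the structure of that argument, which I would recall to confirm it. Conditioning on every other coordinate, for instance on the odd-indexed coordinates, makes the even-indexed terms conditionally independent. Each conditional characteristic function then satisfies
$$
\left|\mathbb E\bigl[e^{itg_{k/n}(X_{n,k})}\,\big|\,X_{n,k-1},X_{n,k+1}\bigr]\right|\le 1-c|t|^{1-\frac1\alpha},\qquad |t|\ge\delta .
$$
This comes from the two-sided density bounds together with the fact that a H\"older function cannot stay within $o(1/|t|)$ of an arithmetic progression on a set of positive volume. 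Multiplying these bounds over roughly $n/2$ indices gives $C_1e^{-c_1n|t|^{1-1/\alpha}}$, with $c_1$ depending only on the uniform bounds. This uniformity is the main point to check; the rest is routine.

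\textbf{The integral estimate.} Set $\beta=\frac{1-\alpha}{\alpha}\ge 0$, so that $|t|^{1-1/\alpha}=|t|^{-\beta}$. On the range $\delta\le|t|\le n^{(d-1)/2}$ we have
$$
n|t|^{-\beta}\ge n^{\gamma},\qquad \gamma:=1-\beta\frac{d-1}{2}.
$$
Now $\gamma>0$ is equivalent to $d-1<\frac{2\alpha}{1-\alpha}$, which is exactly $d<\frac{1+\alpha}{1-\alpha}$. When $\alpha=1$ we have $\beta=0$ and $\gamma=1$. Hence
$$
\int_{\delta\le|t|\le n^{(d-1)/2}}\frac{|\mathbb E[e^{itS_n}]|}{|t|}\,dt
\le 2C_1e^{-c_1n^{\gamma}}\int_\delta^{n^{(d-1)/2}}\frac{dt}{t}
=O\!\left(e^{-c_1n^{\gamma}}\log n\right),
$$
which is $o(n^{-d/2})$.
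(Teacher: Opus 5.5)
Your route is the paper's: it simply invokes \cite[Proposition 40]{dolgopyat2023berry} and leaves the uniformity over the rows implicit. Your computation of the integral is also correct, since $\gamma>0$ if and only if $d<\frac{1+\alpha}{1-\alpha}$.

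\textbf{The gap.} It sits in the step you flag as the main point. The bound $|\mathbb E[e^{itg_{k/n}(X_{n,k})}\,|\,X_{n,k-1},X_{n,k+1}]|\le 1-c|t|^{1-1/\alpha}$ does not follow from two-sided density bounds plus H\"older continuity. For $g\equiv 0$ every hypothesis you list holds, yet $|\mathbb E[e^{itS_n}]|=1$ for all $n$. The ``fact'' you invoke is false: a constant H\"older function lies exactly on an arithmetic progression on a set of full volume. It is also the wrong kind of statement. H\"older continuity only bounds the oscillation of $g_u$ from above. What you need is a uniform lower bound on the oscillation, converted into a lower bound of order $|t|^{1-1/\alpha}$ on the volume of the set where $tg_u$ is far from $\theta+2\pi\mathbb Z$. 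Note that this volume tends to $0$ when $\alpha<1$, so ``positive volume'' does not capture it. The lemma as printed omits this non-degeneracy too. It is available only because the lemma is used in Theorem~\ref{Edge2}, where Assumption~\ref{MainAssm}~(v) is in force.

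\textbf{Repair.} The argument has four steps.
\begin{enumerate}
\item By (v) no $g_u$ is constant, since $g_u\equiv c$ would make $Q_{u,it}=e^{itc}Q_u$ have spectral radius $1$. Since $|\operatorname{osc}(g_u)-\operatorname{osc}(g_v)|\le 2C_g|u-v|$, compactness of $[0,1]$ gives $\eta:=\inf_u\operatorname{osc}(g_u)>0$.
\item The conditional density $q$ of $X_{n,k}$ given its neighbours satisfies $q\ge c_0$ with respect to $dy$. Write $\int e^{itg_u}q\,dy=\rho e^{i\theta}$ with $\rho\ge 0$. Then $\rho=\int\cos(tg_u-\theta)\,q\,dy\le 1-c_0\,\mathrm{vol}\{y:\mathrm{dist}(tg_u(y)-\theta,2\pi\mathbb Z)\ge\pi/2\}$.
\item To bound this volume from below, take a tube of curves on which $g_u$ increases by at least $\eta/2$, and use Fubini. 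Along each curve, $tg_u$ passes through $\gtrsim|t|\eta$ distinct points of $\theta+\pi+2\pi\mathbb Z$. By $\alpha$-H\"older continuity, each such passage carries a disjoint piece of length $\asymp|t|^{-1/\alpha}$ on which $tg_u$ stays at distance at least $\pi/2$ from $\theta+2\pi\mathbb Z$.
\item This gives volume $\gtrsim|t|^{1-1/\alpha}$ for $|t|\ge\delta\asymp 1/\eta$. The exponent is exactly (number of crossings $\asymp|t|$) times (H\"older radius $|t|^{-1/\alpha}$).
\end{enumerate}

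With this input your conditioning-on-alternate-coordinates argument closes. Its constants depend only on $\eta$, the density bounds and the H\"older data. So it also settles the uniformity in $n$ directly, without having to inspect the constants in \cite{dolgopyat2023berry}.
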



\section{Proof of the limit theorems}
\subsection{Proof of Theorem \ref{Var}} 
Recall \eqref{Recall}. Namely, for $z$ small enough we have
$$
\left(\prod_{k=1}^{n}Q_{k/n,z}\right)\textbf{1}=\prod_{k=1}^{n}\lambda(k/n,z)\prod_{k=1}^{n-1}\langle\varphi(k/n,z),\upsilon((k+1)/n,z)\rangle\cdot \upsilon(0,z)
$$
$$
+\prod_{k=1}^{n}\lambda(k/n,z)U_n(z),
$$
where $\|U_n(z)\|_\infty\leq C/n$. 
Thus, 
\begin{equation}\label{ii}
 \Lambda_n(z)=O(1)+O(1/n)+\sum_{k=1}^{n}\ln\lambda(k/n,z)+
\sum_{k=1}^{n-1}\ln \langle\varphi(k/n,z),\upsilon((k+1)/n,z)\rangle.   
\end{equation}
Now, by the Cauchy integral formula, we see that 
$$
\text{Var}(S_n)=\Lambda_n''(0)=O(1)+O(1/n)+\sum_{k=1}^{n}(\ln\lambda(k/n,z))''|_{z=0}
$$
$$
+\sum_{k=1}^{n-1}\left(\ln \langle\varphi(k/n,z),\upsilon((k+1)/n,z)\rangle\right)''|_{z=0}.
$$
Since the above sums are Riemann sums of functions with a uniform Lipschitz constant, we thus get that
$$
\lim_{n\to\infty}\frac1n\text{Var}(S_n)=\int_{0}^1(\ln\lambda(u,z))''|_{z=0}\,du+\int_{0}^1
\left(\ln \langle\varphi(u,z),\upsilon(u,z)\rangle\right)''|_{z=0}\,du
:=a^2. 
$$
Notice that 
$$
\ln \left\langle\varphi(u,z),\upsilon(u,z)\right\rangle=0,
$$
and so the second integral on the above right-hand side vanishes.
Now, the functions $u\to\lambda(u,z),\varphi(u,z),\upsilon(u,z)$ are Lipschitz continuous, uniformly in $z$. We thus see that 
$$
\sup_n\left|\frac1n\text{Var}(S_n)-a^2\right|<\infty.
$$
Now, as the operators $Q_{u,it}$ have spectral radius smaller than one for $t\not=0$ (small enough), we must have $a>0$. Indeed, the second derivative of $\ln\lambda(u,z)$ at $z=0$ is  
$$
\lim_{n\to\infty}\frac1n\text{Var}\left(\sum_{j=1}^n g(u,Y_{j,u})\right),
$$
where $Y_{j,u}$ is the stationary Markov chain with transition operator $Q_u$,
and so it is positive as $r(Q_{u,it})<1$ for sufficiently small $t$.

Let us prove  the existence of $b\in\mathbb R$ such that 
$$
\sup_n|\mathbb E[S_n]-bn|<\infty.
$$
By \eqref{ii} and the Cauchy integral formula, we see that 
$$
\mathbb E[S_n]=\Lambda_n'(0)=O(1)+O(1/n)+\sum_{k=1}^{n}\left(\ln\lambda(k/n,z)\right)'|_{z=0}
$$
$$
+\sum_{k=1}^{n-1}\left(\ln \langle\varphi(k/n,z),\upsilon((k+1)/n,z)\rangle\right)'|_{z=0}.
$$
Since the above sums are Riemann sums of functions with a uniform Lipschitz constant and 
$\ln \langle\varphi(u,z),\upsilon(u,z)\rangle=0,$ 
we get the result with 
$$
b=\int_{0}^1(\ln\lambda(u,z))'|_{z=0}\,du.
$$
Finally, note that 
$$
(\ln\lambda(u,z))'|_{z=0}=\int g(u,x)d\pi_u(x).
$$

To see why we also have 
$$
\sup_n\left|\mathbb E[(S_n-bn)^2]-na^2\right|<\infty.
$$
We note that 
$$
\mathbb (E[S_n])^2-n^2b^2=(\mathbb E[S_n]+bn)(\mathbb E[S_n]-bn)=O(n)O(1/n)=O(1).
$$
\qed

\subsubsection{Higher order cumulants and moments}\label{Rem1}
Let $k\geq 3$ be an integer.  Let $\Gamma_k(S)$ be the $k$-th cumulant of a bounded random variable $S$, which we recall is the $k$-th derivative at $z=0$ of $\ln\mathbb E[e^{zS}]$.
Set 
$$
\gamma_k=\int_{0}^1(\ln\lambda(u,z))^{(k)}|_{z=0}\,du.
$$
Then $b=\gamma_1$ and $a^2=\gamma_2$.
Note that 
$$
(\ln\lambda(u,z))^{(k)}|_{z=0}=\lim_{n\to\infty}\frac1n\Gamma_k(S_{n,u}),
$$
where 
$$
S_{n,u}=\sum_{j=1}^ng(u,Y_{j,u})
$$
and  $(Y_{j,u})_{j=1}^\infty$ is the stationary Markov chain with transition operator $Q_u$.
\begin{theorem}\label{HighThm}
For all $k\geq3$, we have 
$$
\sup_n|\Gamma_k(S_n)-n\gamma_k|<\infty.
$$    
Moreover, there exist $C_k>0$ independent of $Q_u,\pi_u$, such that for all even $k\geq 3$,
$$
\left|\mathbb E[(S_n-bn)^k]-C_kn^{k/2}a^{k}\right|=O(n^{k/2-1}),
$$
while for all odd $k\geq 3$,
$$
\left|\mathbb E[(S_n-bn)^k]-C_kn^{[k/2]}a^{k-3}\gamma_3\right|=O(n^{[k/2]-1}).
$$
\end{theorem}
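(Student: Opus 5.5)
The plan is to mimic the proof of Theorem \ref{Var}, now taking $k$ derivatives instead of one or two. By \eqref{Recall} and the parametric Perron--Frobenius Theorem \ref{RPF theorem single operator}, for $|z|$ small we have the representation \eqref{ii}:
$$\Lambda_n(z)=\sum_{k'=1}^n\ln\lambda(k'/n,z)+\sum_{k'=1}^{n-1}\ln\langle\varphi(k'/n,z),\upsilon((k'+1)/n,z)\rangle+R_n(z),$$
where $R_n(z)=\ln\pi_0(\upsilon(1/n,z)+U_n(z)/\prod\ldots)$, more precisely the logarithm of the initial pairing plus an $O(1/n)$ term. The first step is to check that $R_n$ is analytic and uniformly bounded on a fixed disc $|z|<\delta$, independently of $n$. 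This holds because the pairing equals $1+O(|z|)+O(1/n)$. The Cauchy integral formula on a slightly smaller disc then gives $\sup_n|R_n^{(k)}(0)|<\infty$ for every $k$.

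The second step handles the two sums.

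\textbf{The pairing sum.} Since $\langle\varphi(u,z),\upsilon(u,z)\rangle=1$ and $\upsilon$ is Lipschitz in $u$ uniformly in $z$ (item 7), each term satisfies $\ln\langle\varphi(k'/n,z),\upsilon((k'+1)/n,z)\rangle=O(1/n)$ uniformly on the disc. The whole sum is therefore analytic and $O(1)$ there, and again by Cauchy its $k$-th derivative at $0$ is $O(1)$.

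\textbf{The eigenvalue sum.} Set $f_k(u)=(\ln\lambda(u,z))^{(k)}|_{z=0}$. We need $u\mapsto f_k(u)$ to be Lipschitz. This follows because $u\mapsto\ln\lambda(u,z)$ is Lipschitz uniformly for $|z|<\delta$: use item 7 together with $|\lambda|$ being bounded below. Cauchy's formula then transfers the Lipschitz bound to all $z$-derivatives at $0$. Consequently $\sum_{k'=1}^n f_k(k'/n)=n\int_0^1f_k\,du+O(1)=n\gamma_k+O(1)$, which proves $\sup_n|\Gamma_k(S_n)-n\gamma_k|<\infty$. The identification of $f_k$ with $\lim\frac1n\Gamma_k(S_{n,u})$ is the same statement applied to the constant-in-$u$ (stationary) array.

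For the moments, I would express $\mathbb E[(S_n-bn)^k]$ through the cumulants of $S_n-bn$ via the standard moment–cumulant formula,
$$\mathbb E[(S_n-bn)^k]=\sum_{\text{partitions }P\text{ of }\{1,\dots,k\}}\prod_{B\in P}\Gamma_{|B|}(S_n-bn).$$
Here $\Gamma_1(S_n-bn)=\mathbb E S_n-bn=O(1)$, $\Gamma_2=na^2+O(1)$, and $\Gamma_j=n\gamma_j+O(1)$ for $j\ge3$ by the first part. A partition with $p$ blocks has product of order $n^{p'}$, where $p'$ is the number of blocks of size at least $2$; blocks of size one contribute only $O(1)$.

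\textbf{Even $k$.} The maximal order $n^{k/2}$ is attained only by the $(k-1)!!$ pair partitions, each giving $(na^2+O(1))^{k/2}=n^{k/2}a^k+O(n^{k/2-1})$. Every other partition has at most $k/2-1$ blocks of size $\ge2$, so $C_k=(k-1)!!$.

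\textbf{Odd $k$.} The maximal number of non-singleton blocks is $[k/2]$. This is attained by one triple plus $(k-3)/2$ pairs, giving $\gamma_3a^{k-3}n^{[k/2]}$. It is also attained by one singleton plus pairs, but the singleton contributes $\Gamma_1=O(1)$, so such partitions only contribute $O(n^{[k/2]})$. This is the delicate point: it spoils the claimed rate unless $\mathbb ES_n-bn$ is treated carefully.

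I expect this odd case to be the main obstacle. To handle it I would first replace $bn$ by $\mathbb E S_n$: since the difference is $O(1)$, the binomial expansion shifts $\mathbb E[(S_n-bn)^k]$ by terms $O(1)\cdot\mathbb E|S_n-\mathbb ES_n|^{k-1}=O(n^{(k-1)/2})=O(n^{[k/2]})$. In the odd case this is unfortunately of the same order as the main term. So either the stated estimate must be read with centering at $\mathbb E S_n$, which kills all singleton blocks and yields $C_k=\binom{k}{3}(k-4)!!$ times $\gamma_3a^{k-3}$ with error $O(n^{[k/2]-1})$, or one needs an additional argument showing the $O(1)$ mean correction contributes at a lower order. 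I would present the centered version and flag the singleton contribution explicitly.
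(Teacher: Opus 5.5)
Your treatment of the cumulants and of the even moments follows the paper's argument, and it is correct. The paper also differentiates \eqref{ii} $k$ times. It uses the Cauchy integral formula and the uniform-in-$z$ Lipschitz dependence on $u$ to turn the eigenvalue sum into a Riemann sum with $O(1)$ error. It then passes to moments through Fa\`a di Bruno's formula, which is your sum over set partitions in another form; the tuple $(0,k/2,0,\dots,0)$ gives your $C_k=(k-1)!!$. You supply two details the paper leaves implicit: the uniform boundedness of the boundary term $R_n$, and the $O(1/n)$ size of each pairing term.

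On odd $k$ your suspicion is right, and you should drop the alternative ``or one needs an additional argument'', because no such argument exists. After reducing to $b=0$, the paper's proof discards every tuple with $m_1\ge 1$ on the grounds that $\Gamma_1(S_n)/n=O(1/n)$. But the tuple $m_1=1$, $m_2=(k-1)/2$ carries the factor $n^{m_1+m_2}=n^{(k+1)/2}$. It therefore contributes $n^{(k+1)/2}\cdot O(1/n)=O(n^{[k/2]})$ at leading order; these are exactly your singleton-plus-pairs partitions.

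The odd-$k$ estimate, as stated with centering at $bn$, is false. Take $E=[-1,1]$, $Q_u(x,\cdot)=\mu$ (normalized Lebesgue measure) for all $u,x$, and $g(u,x)=u+x$.
\begin{itemize}
\item Assumptions (i)--(v) hold; in particular $r(Q_{u,it})=|\sin t/t|<1$ for $t\neq 0$.
\item One computes $b=\tfrac12$, $a^2=\tfrac13$ and $\gamma_3=0$.
\item Yet $S_n-bn=\tfrac12+T_n$, where $T_n$ is a sum of $n$ i.i.d.\ uniform variables on $[-1,1]$, with $\mathbb E T_n=\mathbb E T_n^3=0$ and $\mathbb E T_n^2=n/3$.
\item Hence $\mathbb E[(S_n-bn)^3]=\tfrac n2+\tfrac18$, which is not $O(1)$.
\end{itemize}
In general $\mathbb E S_n-bn$ is a genuinely nonzero $O(1)$ quantity. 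Here it is the Riemann-sum error $\sum_{k=1}^n k/n-n/2=\tfrac12$.

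So the correct odd-$k$ statement is your centered version. Equivalently, keeping the centering at $bn$,
\[
\mathbb E[(S_n-bn)^k]=n^{[k/2]}\Big(\tbinom{k}{3}(k-4)!!\,a^{k-3}\gamma_3+k\,(k-2)!!\,a^{k-1}(\mathbb E S_n-bn)\Big)+O(n^{[k/2]-1}),
\]
with the convention $(-1)!!=1$. Your partition bookkeeping proves this directly, so present it alongside the centered version rather than leaving the question open.
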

\begin{proof}
By differentiating $k$ times, it follows that   
$$
\sup_n|\Gamma_k(S_n)-n\gamma_k|<\infty.
$$    
The asymptotic formula for the moments follows from the general relation between moments and cumulants and the above estimate. Indeed, suppose $b=0$. Denote $M_n(z)=\mathbb E[e^{zS_n}]$. Then $M_n(z)=e^{\Lambda_n(z)}$, and so
by the Fa\'a di Bruno formula \cite[Section 1.3]{FDB}, we have
$$
\mathbb E[(S_n)^k]=(M_n)^{(k)}(0)=\sum_{m_1,\cdots,m_k}\frac{k!}{\prod_{j=1}^k(m_j)!}\prod_{j=1}^k\left(\frac{\Gamma_j(S_n)}{j!}\right)^{m_j}
$$
where $m_1,\cdots,m_k$ are nonnegative integers satisfying $\sum_{j=1}^kjm_j=k$. Thus,
$$
\mathbb E[(S_n)^k]=\sum_{m_1, \cdots,m_k}n^{m_1+ \cdots +m_k}\frac{k!}{\prod_{j=1}^k(m_j)!}\prod_{j=1}^k\left(\frac{\Gamma_j(S_n)/n}{j!}\right)^{m_j}.
$$
Note that as $b=0$ we have $\Gamma_1(S_n)/n=O(1/n)$ and so we can disregard these terms, namely assume that $m_1=0$. In this case, $\sum_{j=2}^{k}m_j\leq 2\sum_{j=2}^k jm_j=k$ and so $\sum_{j=2}^{k}m_j\leq k/2$. Thus, as $\Gamma_j(S_n)/n$ are bounded in $n$, we see that we can disregard all the terms for which $\sum_{j=2}^{k}m_j<k/2$. Note that for even $k$, we are left with the tuple $(m_1,\cdots,m_k)=(0,k/2,0,\cdots,0)$, and so 
$$
\mathbb E[(S_n)^k]=C_kn^{k/2}(\Gamma_2(S_n)/n)^{k/2}+O(n^{k/2-1}).
$$
Using that $\gamma_2(S_n)/n=a^2+O(1/n)$, we can  replace  $\gamma_2(S_n)/n$ with $a^2$. 

For odd $k$, we are left with the tuple $(m_1,\cdots,m_k)=(0,(k-3)/2,1,0,\cdots,0)$, which gives us
$$
\mathbb E[(S_n)^k]=C_kn^{[k/2]}(\Gamma_2(S_n)/n)^{(k-3)/2}\Gamma_3(S_n)/n+O(n^{[k/2]-1}),
$$
and again we can replace $\Gamma_2(S_n)/n$ by $a^2$ and $\Gamma_3(S_n)/n$ by $\gamma_3$.
\end{proof}
\begin{remark}
Instead of disregarding the term $O(n^{[k/2]-1})$, we can actually replace it with a correction term up to an $O(1)$ error term. Thus, the proof above actually shows that we can expand the moments of $n^{-[k/2]}(S_n-bn)$ in negative powers of $n$ up to an error term $O(n^{-[k/2]})$. 
\end{remark}

\subsection{Proof of Theorem \ref{BE}}
The theorem follows from Lemma \ref{GrowthLemma}, \cite[Proposition 24]{dolgopyat2023berry}, Theorem \ref{Var}, and the results in \cite[Section 3.2]{hafouta2022non} (which allow us to pass from the standardized case to $(S_n-bn)/\sqrt n$). 
\qed

\subsection{Proof of Theorem \ref{LLT}}
First, since $a>0$ by Lemma \ref{GrowthLemma} and \cite[Corollary 28]{dolgopyat2023berry},
we see that there exist $c,C,\delta_0>0$  such that for all $n$ and $t\in[-\delta_0,\delta_0]$, we have 
\begin{equation}\label{ff}
|\mathbb E[e^{itS_n}]|\leq Ce^{-ct^2n}    
\end{equation}
 Thus, Theorem \ref{LLT} follows from Theorem \ref{BE}, \cite[Theorem 2.2.3]{HK}, Lemma \ref{t cpt} and \eqref{ff}.

\qed
\subsection{Proof of Theorem \ref{Edge0}}
The theorem follows from Lemma \ref{GrowthLemma} and Lemma \ref{t cpt} together with \cite[Proposition 25]{dolgopyat2023berry} and the results in \cite[Section 5.1]{hafouta2026non} (which allow us to pass from the standardized case to $(S_n-bn)/\sqrt n$).

\qed
\subsection{Proof of Theorems \ref{Edge1} and \ref{Edge2}}
Theorems \ref{Edge1} and \ref{Edge2} follow from Lemma \ref{GrowthLemma}, Lemma \ref{t cpt}, Corollary \ref{PelCor}, and Lemma \ref{DH Lemma} together with \cite[Proposition 25]{dolgopyat2023berry} and the results in \cite[Section 5.1]{hafouta2026non} (which again allow us to pass from the standardized case to $(S_n-bn)/\sqrt n$).

\subsection{Proof of Theorem \ref{LDP}}
Suppose $b=0$ (otherwise we can replace $g$ with $g-b$). Using \ref{Recall} with $z=t\in \mathbb R$ with $|t|$ small enough (say, smaller than some $\epsilon_0$), we see that 
$$
\lim_{n\to\infty}\frac1n\Lambda_n(t)=\int_{0}^1 \ln\lambda(u,t)du.
$$
Using \cite[Lemma XIII.2]{HH}, we get the result with the Fenchel--Legendre transform $c(\cdot)$ of $t\to\int_{0}^1 \ln\lambda(u,t)du$, given by
$$
c(\epsilon)=\sup_{|t|\leq\epsilon_0}\left(t\epsilon-\int_{0}^1 \ln\lambda(u,t)du\right).
$$
We note that $c(\cdot)$ has the desired properties since, by assumption, $a>0$, the function $t\to \int_{0}^1 \ln\lambda(u,t)du$ is analytic in $t$, and
$$
a^2=\frac{d^2}{dt^2}\left(\int_{0}^1 \ln\lambda(u,t)\,du\right)\Big|_{t=0}=\int_{0}^1 (\ln\lambda(u,t))''|_{t=0}\,du.
$$
\qed

\subsection{Proof of Theorem \ref{MDP}}
Suppose again $b=0$. Let $(c_n)_{n=1}^\infty$ be a sequence satisfying  $\lim_{n\to\infty}\frac{c_n}{\sqrt n}=\infty$.
Let $t\in\mathbb R$. Recall that $\Lambda_n(0)=0$, $\Lambda_n'(0)=\mathbb E[S_n]=O(1/n)$, $\Lambda_n''(0)=na^2+O(1/n)$. Moreover, recall that 
$$
a^2=\int_{0}^1(\ln\lambda(u,z))''|_{z=0}\,du
$$
and 
$$
b=\int_{0}^1(\ln\lambda(u,z))'|_{z=0}\,du.
$$
Next, by  \eqref{Recall} with $z=\xi/d_n$, $d_n=n/c_n$,
we have
$\sup_{|\xi|\leq |t|}|\Lambda'''(\xi/d_n)|=O(n)$. Thus, by the Lagrange form of the second-order Taylor remainder, 
for  $n$ large enough, we have
$$
\Lambda_n(t/d_n)=O(1/n)+\frac{t^2(na^2+O(1/n))}{2d_n^2}+O(nd_n^{-3}).
$$
Consequently,
$$
\lim_{n\to\infty}\frac1{c_n^2/n}\Lambda_n(t/d_n)=\frac12a^2t^2.
$$
Now the result follows from the G\"artner--Ellis theorem. 
\qed

\newpage
\printbibliography

\end{document}